\newcommand{\assign}{:=}
\newcommand{\longhookrightarrow}{{\lhook\joinrel\relbar\joinrel\rightarrow}}
\newcommand{\mathd}{\mathrm{d}}
\newcommand{\nocomma}{}
\newcommand{\nosymbol}{}
\newcommand{\tmop}[1]{ {#1} }
\newcommand{\tmtextit}[1]{{\itshape{#1}}}
\newtheorem{lemma}{Lemma}
\newtheorem{remark}{Remark} %}
\newtheorem{theorem}{Theorem}
\begin{document}

\title[Compressible NS Equations]{On some Liouville Type Theorems for the Compressible Navier-Stokes
Equations}

\author[D. Li]{Dong Li}
\address[D. Li]
{Department of Mathematics, University of British Columbia, 1984 Mathematics Road,
Vancouver, BC, Canada V6T1Z2}
\email{dli@math.ubc.ca}

\author[X. Yu]{Xinwei Yu}
\address[X. Yu]
{Department of Mathematical and Statistical Sciences, University of Alberta, 632 CAB, Edmonton, AB, Canada T6G2G1}
\email{xinweiyu@math.ualberta.ca}

\subjclass{35Q35}

\keywords{Compressible Navier-Stokes, Liouville}

\begin{abstract}
  We prove several Liouville type results for stationary
  solutions of the $d$-dimensional compressible Navier-Stokes equations. In
  particular, we show that when the dimension $d \geqslant 4$, the natural
  requirements $\rho \in L^{\infty} \left( \mathbbm{R}^d \right)$, $v \in
  \dot{H}^1 \left( \mathbbm{R}^d \right)$ suffice to guarantee that the
  solution is trivial. For dimensions $d=2,3$, we assume the extra
  condition $v \in L^{\frac{3d}{d-1}}(\mathbb R^d)$. This improves a
  recent result of Chae \cite{Chae2012}.
\end{abstract}

\maketitle

\section{Introduction}

In this paper we consider the stationary barotropic compressible
Navier-Stokes equations on $\mathbbm{R}^d$, $d \geqslant 2$
\begin{align}
  \nabla \cdot \left( \rho v \right) & =\; 0,  \label{eq:NS-density}\\
  \nabla \cdot \left( \rho v \otimes v \right) & =\;  - \nabla P + \mu
  \triangle v + \left( \mu + \lambda \right) \nabla \left( \nabla \cdot v
  \right),  \label{eq:NS-momentum}\\
  P \left( \rho \right) & =\;  A \rho^{\gamma}, \hspace{2em} \gamma > 1.
  \label{eq:NS-pressure}
\end{align}
Here $\rho, v, P$ are the density, velocity and pressure of the
fluid respectively. The coefficients $\lambda, \mu, A$ satisfy $\mu
> 0, \lambda + 2 \mu > 0 \nocomma$, $A > 0$. This system is a
reduction of the compressible Navier-Stokes equations by assuming
that the volumetric entropy $s$ remains constant. For more
discussions on the compressible Navier-Stokes equations and its
various related models, see e.g.
\cite{Feireisl2004,Lions1998,Novotny2004}.

For the system (\ref{eq:NS-density}--\ref{eq:NS-pressure}), it turns out that,
if we put suitable global integrability conditions on $\rho$ and/or $v$,
then the only possible solution is the trivial one $v = 0, \rho =$constant.
For example, if we assume $\rho \in L^{\infty} \cap C^{\infty}, v \in
C_c^{\infty}$, we can then multiply the momentum equation
(\ref{eq:NS-momentum}) by $v$ and integrate over space:
\begin{equation}
  \int_{\mathbbm{R}^d} \left[ \nabla \cdot \left( \rho v \otimes v \right)
  \right] \cdot v \mathd x = - \int_{\mathbbm{R}^d} \nabla P \cdot v \mathd x
  + \mu \int_{\mathbbm{R}^d} \left( \triangle v \right) \cdot v \mathd x +
  \left( \mu + \lambda \right)  \int_{\mathbbm{R}^d} \left[ \nabla \left(
  \nabla \cdot v \right) \right] \cdot v \mathd x.
\end{equation}
Integration by parts gives
\begin{equation}
  \int_{\mathbbm{R}^d} \left[ \nabla \cdot \left( \rho v \otimes v \right)
  \right] \cdot v \mathd x = \int_{\mathbbm{R}^d}
  \Bigl[(\rho v \cdot \nabla) v \Bigr] \cdot
  v \mathd x = \int_{\mathbbm{R}^d} \rho v \cdot \nabla \left( \frac{v^2}{2}
  \right) \mathd x = - \int_{\mathbbm{R}^d} \nabla \cdot \left( \rho v \right)
  \frac{v^2}{2} \mathd x = 0,
\end{equation}
and
\begin{equation}
  \int_{\mathbbm{R}^d} \nabla P \cdot v \mathd x = \int_{\mathbbm{R}^d}
  \left( A \rho^{\gamma} \right) \cdot v \mathd x = A \int_{\mathbbm{R}^d}
  \rho v \cdot \nabla \rho^{\gamma - 1} \mathd x = - A \int_{\mathbbm{R}^d}
  \nabla \cdot \left( \rho v \right) \rho^{\gamma - 1} \mathd x = 0.
\end{equation}
These lead to
\begin{equation}
  \mu \int_{\mathbbm{R}^d} \left| \nabla v \right|^2 \mathd x + \left( \mu +
  \lambda \right)  \int_{\mathbbm{R}^d} \left( \nabla \cdot v \right)^2 \mathd
  x = 0 \Longrightarrow v \equiv \text{\tmop{constant}.}
\end{equation}
As $v \in C_c^{\infty}$ we obtain $v\equiv 0$. Since
$\nabla v = 0$ we also get from (\ref{eq:NS-momentum}) that $\nabla
\left( A \rho^{\gamma} \right) = \nabla P = 0$. Hence $\rho \equiv$constant too.

Thus we have easily shown that, if $\rho \in L^{\infty} \cap C^{\infty}, v
\in C_c^{\infty}$, then the only solution to
(\ref{eq:NS-density}--\ref{eq:NS-pressure}) is the trivial one. However such a simple
argument does not work any more once we relax the compact support condition on $v$
and weaken the regularity assumptions. Very
recently, Chae \cite{Chae2012} has shown that the above conclusion, $v = 0,
\rho =$constant, still holds as long as
\begin{equation}
  \left\| \rho \right\|_{L^{\infty} \left( \mathbbm{R}^d \right)} + \left\|
  \nabla v \right\|_{L^2 \left( \mathbbm{R}^d \right)} + \left\| v
  \right\|_{L^{\frac{d}{d - 1}} \left( \mathbbm{R}^d \right)} < \infty
  \hspace{2em} \text{\tmop{when} } d \leqslant 6, \label{eq:Chae-d=6}
\end{equation}
\begin{equation}
  \left\| \rho \right\|_{L^{\infty} \left( \mathbbm{R}^d \right)} + \left\|
  \nabla v \right\|_{L^2 \left( \mathbbm{R}^d \right)} + \left\| v
  \right\|_{L^{\frac{d}{d - 1}} \left( \mathbbm{R}^d \right)} + \left\| v
  \right\|_{L^{\frac{3 d}{d - 1}} \left( \mathbbm{R}^d \right)} < \infty
  \hspace{2em} \text{\tmop{when} } d \geqslant 7. \label{eq:Chae-d=7}
\end{equation}
The condition
\begin{align*}
\| \rho\|_{L^{\infty}(\mathbb R^d)} + \| \nabla v \|_{L^2(\mathbb R^d)} <\infty
\end{align*}
is very natural as most physical flows have bounded density and finite enstrophy. On
the other hand, the integrability condition
$
v \in L^{\frac d {d-1}} (\mathbb R^d)
$
is a fairly strong assumption since physical flows with finite energy need not
belong to this class.
In this paper we will remove this assumption and weaken further the integrability conditions on the
velocity field $v$. Our main
results are the following.

\begin{theorem}
  \label{thm:main}Let the dimension $d \geqslant 2$. Suppose $\left( \rho, v
  \right)$ is a smooth solution to (\ref{eq:NS-density}--\ref{eq:NS-pressure})
  satisfying
  \begin{align}
    \left\| \rho \right\|_{L^{\infty} \left( \mathbbm{R}^d \right)} < \infty,
    \hspace{2em} v \in \dot{H}^1 \left( \mathbbm{R}^d \right), &  &
    \text{\tmop{if} } d \geqslant 4;  \label{eq:cond-d=4}\\
    \left\| \rho \right\|_{L^{\infty} \left( \mathbbm{R}^d \right)} + \left\|
    \nabla v \right\|_{L^2 \left( \mathbbm{R}^d \right)} + \left\| v
    \right\|_{L^{\frac 92} \left( \mathbbm{R}^d \right)} < \infty, &  &
    \text{\tmop{if} } d = 3;  \label{eq:cond-d=3}\\
    \left\| \rho \right\|_{L^{\infty} \left( \mathbbm{R}^d \right)} + \left\|
    \nabla v \right\|_{L^2 \left( \mathbbm{R}^d \right)} + \left\| v
    \right\|_{L^6 \left( \mathbbm{R}^d \right)} < \infty, &  & \text{\tmop{if} }
    d = 2;  \label{eq:cond-d=2}
  \end{align}
  then $v = 0$ and $\rho =$constant.
\end{theorem}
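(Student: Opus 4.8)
The plan is to run a weighted energy estimate and show that every boundary term vanishes in the limit. Fix a smooth radial cut-off $\phi_R$ with $\phi_R\equiv1$ on $B_R$, $\phi_R\equiv0$ off $B_{2R}$, $|\nabla\phi_R|\lesssim R^{-1}$, $|\nabla^2\phi_R|\lesssim R^{-2}$. Dotting (\ref{eq:NS-momentum}) with $v\phi_R^2$, integrating over $\mathbb R^d$ and integrating by parts — using $\nabla\cdot(\rho v)=0$ repeatedly, which makes the bulk pieces of both the convective and the pressure terms cancel — one reaches an identity
\begin{equation*}
\mu\!\int\!|\nabla v|^2\phi_R^2+(\mu+\lambda)\!\int\!(\nabla\cdot v)^2\phi_R^2=\mathcal E_1+\mathcal E_2+\mathcal E_3+\mathcal E_4 ,
\end{equation*}
where all $\mathcal E_j$ are supported in the annulus $A_R:=B_{2R}\setminus B_R$: schematically $\mathcal E_1=\tfrac\mu2\int|v|^2\triangle\phi_R^2$, $\mathcal E_2=-(\mu+\lambda)\int(\nabla\cdot v)\,v\cdot\nabla\phi_R^2$, $\mathcal E_3=\tfrac12\int\rho|v|^2\,v\cdot\nabla\phi_R^2$, and the pressure term $\mathcal E_4=\tfrac{\gamma}{\gamma-1}\int P\,v\cdot\nabla\phi_R^2$. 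Once $\mathcal E_1,\dots,\mathcal E_4\to0$ along some $R_k\to\infty$ is established, then (since $\nabla v\in L^2$) $\mu\int|\nabla v|^2+(\mu+\lambda)\int(\nabla\cdot v)^2=0$; rewriting $\int|\nabla v|^2=\int|\nabla\times v|^2+\int(\nabla\cdot v)^2$ gives $\mu\int|\nabla\times v|^2+(2\mu+\lambda)\int(\nabla\cdot v)^2=0$, so (as $\mu>0$, $\lambda+2\mu>0$) $\nabla\times v\equiv0$, $\nabla\cdot v\equiv0$; hence $v$ is harmonic, the integrability hypothesis forces $v\equiv0$, and then (\ref{eq:NS-momentum}) gives $\nabla P\equiv0$, i.e. $\rho$ constant.

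To beat the powers of $R$ lost by crude Hölder on $A_R$, I would split $v$ on the annulus as $v=\bar v_{A_R}+w_R$ with $\bar v_{A_R}$ the mean of $v$ over $A_R$. The scale-invariant Sobolev–Poincaré inequality gives $\|w_R\|_{L^{2^{*}}(A_R)}\lesssim\|\nabla v\|_{L^2(A_R)}=:\varepsilon_R$ (here $2^{*}=\tfrac{2d}{d-2}$ and $v\in L^{2^{*}}$ by Sobolev for $d\ge3$; for $d=2$ one uses the hypothesis $v\in L^6$ in place of $L^{2^{*}}$ throughout), while Jensen gives $|\bar v_{A_R}|\lesssim R^{-(d-2)/2}\|v\|_{L^{2^{*}}(A_R)}$. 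Feeding this into $\mathcal E_1$ and $\mathcal E_2$, every resulting term is bounded by a combination of $|\bar v_{A_R}|^2R^{d-2}\lesssim\|v\|_{L^{2^{*}}(A_R)}^2$, $\varepsilon_R\|v\|_{L^{2^{*}}(A_R)}$ and $\varepsilon_R^2$, each tending to $0$ because $\|v\|_{L^{2^{*}}(A_R)}\to0$ and $\varepsilon_R\to0$ (tails of $L^{2^{*}}$ and $L^2$ functions).

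The crux is $\mathcal E_4\sim R^{-1}\int_{A_R}|v|$: knowing merely that $\rho$, hence $P=A\rho^\gamma$, is bounded is useless here, since $\int_{A_R}|v|$ exceeds what is permitted by a power $R^{d/2}$ — which is exactly why \cite{Chae2012} imposed $v\in L^{d/(d-1)}$. My plan to remove that is the effective viscous flux: taking $\nabla\cdot$ of (\ref{eq:NS-momentum}) yields $\triangle\bigl[(2\mu+\lambda)\nabla\cdot v-P\bigr]=\partial_i\partial_j(\rho v^iv^j)$, and since $\rho v\otimes v\in L^{d/(d-2)}$, $\nabla\cdot v\in L^2$, $P\in L^\infty$, a Liouville argument for the harmonic remainder gives $(2\mu+\lambda)\nabla\cdot v-P=c+R_iR_j(\rho v^iv^j)$ for a constant $c=:-P_\infty$; in particular $P-P_\infty$ lies in $L^q$ for a range of $q$ (e.g. $q=2$ when $d=4$, and all $q>2$ in general, using that $P-P_\infty$ is bounded), and since $\rho\to\rho_\infty:=(P_\infty/A)^{1/\gamma}$ the density defect $\rho-\rho_\infty$ gains integrability as well. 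Substituting $P=(2\mu+\lambda)\nabla\cdot v-R_iR_j(\rho v^iv^j)+P_\infty$ into $\mathcal E_4$ splits it into: a term $\int(\nabla\cdot v)\,v\cdot\nabla\phi_R^2$ of the same benign type as $\mathcal E_2$; a term $\int R_iR_j(\rho v^iv^j)\,v\cdot\nabla\phi_R^2$ (equivalently $\int(P-P_\infty)v\cdot\nabla\phi_R^2$), bounded on $A_R$ by Hölder from the decay of $R_iR_j(\rho v\otimes v)$ or of $P-P_\infty$; and $P_\infty\int v\cdot\nabla\phi_R^2$, which is killed by the divergence-free identity $\int\rho v\cdot\nabla\phi_R^2=0$ — one writes $\int v\cdot\nabla\phi_R^2=-\rho_\infty^{-1}\int(\rho-\rho_\infty)v\cdot\nabla\phi_R^2$ and estimates it via the extra integrability of $\rho-\rho_\infty$. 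I expect this pressure term to be the main obstacle: pinning down precisely which $L^q$ one controls $P-P_\infty$ and $\rho-\rho_\infty$ in, and matching these against the annulus Hölder exponents and powers of $R$ uniformly in $d\ge4$, is the technical heart of the argument.

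Finally $\mathcal E_3\lesssim R^{-1}\int_{A_R}|v|^3$. After the annulus split the $\bar v_{A_R}$-pieces again produce factors like $R^{(4-d)/2}\|v\|_{L^{2^{*}}(A_R)}^k$, vanishing for $d\ge4$, and the $w_R$-piece is handled by $\|w_R\|_{L^3(A_R)}\le\|w_R\|_{L^{2^{*}}(A_R)}|A_R|^{1/3-1/2^{*}}$, which costs only a harmless power of $R$ provided $2^{*}\ge3$, i.e. $d\le6$; for $d\ge7$ one must first upgrade the integrability of $v$ (bootstrapping the Lamé system $\mu\triangle v+(\mu+\lambda)\nabla(\nabla\cdot v)=\nabla\cdot(\rho v\otimes v)+\nabla(P-P_\infty)$ via elliptic estimates, using $P-P_\infty\in\bigcap_{q>2}L^q$) before running the same bound. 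For $d=2,3$ the space $\dot H^1$ is genuinely too weak for $\mathcal E_3$, and this is where the hypothesis $v\in L^{\frac{3d}{d-1}}$ enters: with it $R^{-1}\int_{A_R}|v|^3\lesssim\|v\|_{L^{3d/(d-1)}(A_R)}^3\to0$ directly. Sending $R\to\infty$ then finishes the proof as in the first paragraph.
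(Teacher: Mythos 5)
Your skeleton is the same as the paper's: a cut-off energy identity for \eqref{eq:NS-momentum}, plus the ``effective viscous flux'' relation obtained by taking the divergence of \eqref{eq:NS-momentum} and applying a Weyl/Liouville argument to $P-p_1-(2\mu+\lambda)\nabla\cdot v$, which is exactly Lemma \ref{lem:p2} and is also how the paper neutralizes the pressure term (your $\rho_\infty^{-1}(\rho-\rho_\infty)$ manoeuvre is the paper's $P_1$ and the estimate \eqref{eq:P1-est}; note you need the case split $c=0$ vs.\ $c>0$ to avoid dividing by $\rho_\infty=0$, but that case is harmless). Your treatment of $d=2,3$ using $v\in L^{3d/(d-1)}$ and of $4\le d\le 6$ by direct annulus H\"older on $R^{-1}\int_{A_R}|v|^3\lesssim R^{(4-d)/2}\|v\|_{L^{2^*}(A_R)}^3$ is correct (the Sobolev--Poincar\'e splitting is not even needed there).

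The genuine gap is $d\ge 7$. Your plan is to upgrade the integrability of $v$ by an elliptic bootstrap of the Lam\'e system, but this bootstrap goes the wrong way: the source $\nabla\cdot(\rho v\otimes v)$ is quadratic and you gain only one derivative, so starting from $v\in L^{2d/(d-2)}$ you get $\rho v\otimes v\in L^{d/(d-2)}$ and $(-\triangle)^{-1}\nabla\cdot(\rho v\otimes v)\in L^{d/(d-3)}$, and $d/(d-3)<2d/(d-2)$ precisely when $d>4$ --- the integrability \emph{degrades} rather than improves, so no finite iteration reaches $L^3$ (or $L^{3d/(d-1)}$) when $2^*<3$. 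Moreover the same cubic obstruction appears a second time, in the piece $\int R_iR_j(\rho v^iv^j)\,v\cdot\nabla\phi_R^2$ of your $\mathcal E_4$ (it is morally $R^{-1}\int_{A_R}|v|^3$ again), which you do not flag. The idea you are missing is the paper's second, ``Lebesgue'' cut-off $\phi(|v|^2/R)$ in the test function \eqref{eq:test-function}: on its support $\Omega_R$ one has $|v|\le R^{1/2}$, so $v\in L^\infty(\Omega_R)\cap L^{2d/(d-2)}(\Omega_R)$ interpolates into $L^{3d/(d-1)}(\Omega_R)$ and gives $p_1v\in L^{d/(d-1)}(\Omega_R)$ for \emph{every} $d\ge4$; the extra commutator terms produced by differentiating this cut-off are supported on $C_R=\{R/2\le|v|^2\le R\}$ and vanish because $|C_R|\to0$ by Chebyshev. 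Without this device (or Chae's extra hypothesis $v\in L^{d/(d-1)}$), your argument proves the theorem only for $2\le d\le 6$.
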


\begin{remark}
  The space $\dot{H}^1 \left( \mathbbm{R}^d \right)$ in (\ref{eq:cond-d=4}) is
  defined in the usual way as the completion of $C_c^{\infty} \left(
  \mathbbm{R}^d \right)$ under the norm $\left\| \nabla f \right\|_{L^2 \left(
  \mathbbm{R}^d \right)}$. In essence, (\ref{eq:cond-d=4}) is the condition
  $\left\| \nabla v \right\|_{L^2 \left( \mathbbm{R}^d \right)} < \infty$
  together with the requirement that $v$ decays to $0$ at infinity. Such decay
  is necessary to conclude $v = 0$ as only derivatives of $v$ appear in
  (\ref{eq:NS-density}--\ref{eq:NS-pressure}), while $v$ itself doesn't. Also
  note that conditions (\ref{eq:cond-d=4}--\ref{eq:cond-d=2}) are weaker than
  (\ref{eq:Chae-d=6}--\ref{eq:Chae-d=7}) in the sense that $v$ can decay
  more slowly at infinity.
\end{remark}

\begin{remark}
We stress that in dimensions $d=2,3$, the extra condition $v \in L^{\frac {3d}{d-1}}$ is
very mild. For example, if $v\in H^1(\mathbb R^d)$, then by Sobolev embedding we have
$v \in L^{\frac {3d}{d-1}}$ for $d=2,3$. Thus Theorem \ref{thm:main} asserts that there are no
nontrivial stationary solutions with the velocity field in the natural energy class.
\end{remark}
If we are willing to put more integrability condition on $v$, then the condition on
$\nabla v$ can even be dropped.

\begin{theorem}
  \label{thm:all-d}Let $d \geqslant 2$. Suppose $\left( \rho, v \right)$ is a
  smooth solution to (\ref{eq:NS-density}--\ref{eq:NS-pressure}) satisfying
  \begin{equation}
    \left\| \rho \right\|_{L^{\infty} \left( \mathbbm{R}^d \right)} + \left\|
    v \right\|_{L^{ \frac d  {d - 1}} \left( \mathbbm{R}^d \right)} +
    \left\| v \right\|_{L^{ \frac{3 d} {d - 1}} \left( \mathbbm{R}^d
    \right)} < \infty, \label{eq:cond-all-d}
  \end{equation}
  then $v = 0$ and $\rho =$constant on $\mathbbm{R}^d$.
\end{theorem}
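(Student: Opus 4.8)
The proof is by a single localized energy estimate. The plan is to fix a radial cutoff $\phi\in C_c^\infty(B_2)$ with $\phi\equiv 1$ on $B_1$, $0\le\phi\le 1$, set $\phi_R(x)=\phi(x/R)$ so that $|\nabla\phi_R|\lesssim R^{-1}$ and $|\triangle\phi_R|\lesssim R^{-2}$ are supported in the annulus $A_R=\{\,R\le|x|\le 2R\,\}$, and test the momentum equation \eqref{eq:NS-momentum} against $v\phi_R^2$. One preliminary observation is used throughout: since the exponent $2$ lies between $\tfrac{d}{d-1}$ and $\tfrac{3d}{d-1}$ for every $d\ge 2$, interpolation gives $v\in L^2(\mathbb{R}^d)$.

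The transport and pressure contributions are handled exactly as in the computation in the Introduction, except that the cutoff now leaves a remainder. Using the continuity equation \eqref{eq:NS-density} repeatedly,
\[
\int_{\mathbb{R}^d}\bigl[\nabla\cdot(\rho v\otimes v)\bigr]\cdot v\,\phi_R^2\,\mathd x=-\tfrac12\int_{\mathbb{R}^d}\rho\,|v|^2\,v\cdot\nabla(\phi_R^2)\,\mathd x,\qquad \int_{\mathbb{R}^d}\nabla P\cdot v\,\phi_R^2\,\mathd x=-\tfrac{A\gamma}{\gamma-1}\int_{\mathbb{R}^d}\rho^{\gamma}\,v\cdot\nabla(\phi_R^2)\,\mathd x,
\]
the second identity coming from $\nabla P=\tfrac{A\gamma}{\gamma-1}\rho\,\nabla(\rho^{\gamma-1})$. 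Since $\nabla(\phi_R^2)=O(R^{-1})$ is supported in $A_R$, the first integral is $\lesssim\|\rho\|_{L^\infty}R^{-1}\int_{A_R}|v|^3$ and the second is $\lesssim\|\rho\|_{L^\infty}^{\gamma}R^{-1}\int_{A_R}|v|$; by Hölder's inequality and $|A_R|\lesssim R^d$ these are $\lesssim\|v\|_{L^{3d/(d-1)}(\{|x|\ge R\})}^3$ and $\lesssim\|v\|_{L^{d/(d-1)}(\{|x|\ge R\})}$, hence both tend to $0$ as $R\to\infty$. The exponents $\tfrac{3d}{d-1}$ and $\tfrac{d}{d-1}$ are chosen exactly so that these annular integrals beat the single power of $R^{-1}$.

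After two integrations by parts in the viscous terms the identity becomes
\[
\mu\int_{\mathbb{R}^d}|\nabla v|^2\phi_R^2+(\mu+\lambda)\int_{\mathbb{R}^d}(\nabla\cdot v)^2\phi_R^2=o(1)-2(\mu+\lambda)\int_{\mathbb{R}^d}(\nabla\cdot v)\,(v\cdot\nabla\phi_R)\,\phi_R,
\]
where the $o(1)$ also swallows the remainder $\mu\int|v|^2(|\nabla\phi_R|^2+\phi_R\triangle\phi_R)$, which is $\lesssim R^{-2}\|v\|_{L^2}^2\to 0$. Since $\mu+\lambda$ may be negative, to extract a coercive left-hand side I would invoke the pointwise identity $|\nabla v|^2=(\nabla\cdot v)^2+\tfrac12|\nabla\times v|^2+\nabla\cdot F$, where $|\nabla\times v|^2:=\sum_{i,j}(\partial_i v_j-\partial_j v_i)^2$ and $F$ is quadratic with $|F|\lesssim|v|\,|\nabla v|$; substituting and rearranging turns the left side into $(2\mu+\lambda)\int(\nabla\cdot v)^2\phi_R^2+\tfrac\mu2\int|\nabla\times v|^2\phi_R^2$, with both coefficients strictly positive by $\mu>0$ and $2\mu+\lambda>0$, at the cost of one more error term $-\mu\int F\cdot\nabla(\phi_R^2)$. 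The same identity also yields $\int|\nabla v|^2\phi_R^2\le C\int\bigl[(\nabla\cdot v)^2+|\nabla\times v|^2\bigr]\phi_R^2+o(1)$.

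Here lies the one genuinely delicate point: the two surviving error terms are each $\lesssim R^{-1}\int_{A_R}|v|\,|\nabla v|$, and, unlike in Theorem \ref{thm:main}, there is no a priori bound on $\|\nabla v\|_{L^2}$, so they cannot be estimated outright. The remedy is to apply Young's inequality keeping the weight, $R^{-1}|v|\,|\nabla v|\,\phi_R\le\varepsilon|\nabla v|^2\phi_R^2+C_\varepsilon R^{-2}|v|^2$, absorb the $\varepsilon$-term into the coercive left side via the bound on $\int|\nabla v|^2\phi_R^2$ just noted (with $\varepsilon$ small relative to $\min(\mu,2\mu+\lambda)$), and dispose of the leftover through $R^{-2}\int_{A_R}|v|^2\le R^{-2}\|v\|_{L^2}^2\to 0$. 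One is then left with $c\bigl(\int(\nabla\cdot v)^2\phi_R^2+\int|\nabla\times v|^2\phi_R^2\bigr)\le o(1)$ as $R\to\infty$ for some $c>0$; letting $R\to\infty$ and using Fatou's lemma forces $\nabla\cdot v\equiv 0$ and $\nabla\times v\equiv 0$, hence $\int|\nabla v|^2\phi_R^2\to 0$ and so $\nabla v\equiv 0$. Thus $v$ is a constant vector, and $v\in L^{d/(d-1)}(\mathbb{R}^d)$ forces $v\equiv 0$; putting $v=0$ back into \eqref{eq:NS-momentum} gives $\nabla P=0$, i.e.\ $\nabla(A\rho^{\gamma})=0$, so $\rho$ is constant. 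I expect the absorption of the $\nabla v$-errors to be the main obstacle: it is precisely what forces the combined use of the curl--divergence identity (both for coercivity when $\mu+\lambda<0$ and to control $\int|\nabla v|^2\phi_R^2$) together with the interpolated bound $v\in L^2(\mathbb{R}^d)$; everything else is routine bookkeeping with Hölder's inequality on annuli.
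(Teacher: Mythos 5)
Your argument is correct, but it reaches the conclusion by a genuinely different mechanism than the paper. The paper also tests against $v\psi(x/R_1)$ (after first rewriting everything in divergence form), and the transport, pressure and $|v|^2\triangle\psi_{R_1}$ terms are handled exactly as you do, using the same interpolation $v\in L^2$ from $L^{d/(d-1)}\cap L^{3d/(d-1)}$. The divergence occurs at the cross term $\int(\nabla\cdot v)\,(v\cdot\nabla\psi_{R_1})R_1^{-1}$, which is the whole difficulty since $\nabla v$ is not assumed square-integrable. The paper resolves it by \emph{elliptic} means: taking the divergence of \eqref{eq:NS-momentum} shows $P-p_1-(2\mu+\lambda)\nabla\cdot v$ is weakly harmonic, and Lemma \ref{lem:div-p2} together with the Calder\'on--Zygmund bound $p_1=(-\triangle)^{-1}\partial_i\partial_j(\rho v_iv_j)\in L^{3d/(2(d-1))}$ yields the pointwise identity $\nabla\cdot v=(2\mu+\lambda)^{-1}(P-p_1-c)\in L^{\infty}+L^{3d/(2(d-1))}$; the cross term is then killed by H\"older on the annulus exactly like the transport and pressure terms. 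You instead absorb the cross term into the quadratic form by Young's inequality, which obliges you to restore local coercivity via the pointwise decomposition $|\nabla v|^2=(\nabla\cdot v)^2+\tfrac12|\nabla\times v|^2+\nabla\cdot F$ (needed because $\mu+\lambda$ may be negative and the global inequality $\int(\nabla\cdot v)^2\le\int|\nabla v|^2$ is unavailable under a cutoff); the extra error $\int F\cdot\nabla(\phi_R^2)$ is again absorbable since $|F|\lesssim|v|\,|\nabla v|$ and $v\in L^2$, and all absorptions are between quantities that are finite for each fixed $R$. Both routes close. Yours is more elementary --- no Weyl lemma, no Riesz-transform boundedness --- at the price of the curl--divergence bookkeeping; the paper's buys the structural fact that $\nabla\cdot v$ has explicit integrability, which is what it reuses in Theorem \ref{thm:div-v}. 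One small point you gloss over: after concluding $\nabla\cdot v\equiv0$ and $\nabla\times v\equiv0$, the passage to $\int|\nabla v|^2\phi_R^2\to0$ uses that $F$ then reduces to $\tfrac12\nabla|v|^2$ (or one more absorption); alternatively, $v$ is then componentwise harmonic and lies in $L^{d/(d-1)}$, so it vanishes directly by the mean-value argument of Lemma \ref{lem:p2}.
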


\begin{remark}
It is not difficult to check that
the condition \eqref{eq:cond-all-d} is weaker than \eqref{eq:Chae-d=6}--\eqref{eq:Chae-d=7}
for all $d\ge 2$.

\end{remark}
Finally, it is also possible to put the integrability condition on $\nabla \cdot
v$ instead of $\nabla v$.

\begin{theorem}
  \label{thm:div-v}Let $d \geqslant 2$. Suppose $\left( \rho, v \right)$ is a
  smooth solution to (\ref{eq:NS-density}--\ref{eq:NS-pressure}) satisfying
  \begin{equation}
    \left\| \rho \right\|_{L^{\infty} \left( \mathbbm{R}^d \right)} + \left\|
    \nabla \cdot v \right\|_{L^2 \left( \mathbbm{R}^d \right)} + \left\| v
    \right\|_{L^{\frac{3 d} { d - 1}} \left( \mathbbm{R}^d \right)} <
    \infty, \hspace{2em} \text{\tmop{if} } d \le 3; \label{eq:cond-d=234}
  \end{equation}
  \begin{equation}
    \left\| \rho \right\|_{L^{\infty} \left( \mathbbm{R}^d \right)} + \left\|
    \nabla \cdot v \right\|_{L^2 \left( \mathbbm{R}^d \right)} + \left\| v
    \right\|_{L^{ \frac{3 d}  {d - 1}} \left( \mathbbm{R}^d \right)} +
    \left\| v \right\|_{L^{ \frac{2 d}  {d - 2}} \left( \mathbbm{R}^d
    \right)} < \infty, \hspace{2em} \text{\tmop{if} } d \ge 4;
    \label{eq:cond=d=5}
  \end{equation}
  then $v = 0$ and $\rho =$constant on $\mathbbm{R}^d$.
\end{theorem}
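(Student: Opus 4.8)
The plan is to adapt the cutoff/integration-by-parts strategy used for the earlier theorems, but now controlling the dissipation through $\|\nabla\cdot v\|_{L^2}$ rather than through the full gradient $\|\nabla v\|_{L^2}$. Let $\chi$ be a standard radial cutoff, $\chi=1$ on $B_R$, supported in $B_{2R}$, with $|\nabla\chi|\lesssim R^{-1}$. I would test the momentum equation \eqref{eq:NS-momentum} against $\chi^2 v$ (or a suitable power of $\chi$) and integrate over $\mathbbm{R}^d$. The convective term $\int [\nabla\cdot(\rho v\otimes v)]\cdot \chi^2 v\,\mathd x$ and the pressure term $\int \nabla P\cdot \chi^2 v\,\mathd x$ are handled exactly as in the introduction: after integrating by parts and using \eqref{eq:NS-density}, each produces only a commutator term in which a derivative falls on $\chi^2$; these are error terms of the form $\int |\rho||v|^2|v||\nabla\chi|\,\chi\,\mathd x$ and $A\int |\rho|^{\gamma-1}|\rho||v||\nabla\chi|\,\chi\,\mathd x$, i.e. bounded by $R^{-1}\|\rho\|_\infty\int_{B_{2R}\setminus B_R}|v|^3\,\mathd x$ and $R^{-1}\|\rho\|_\infty^\gamma\int_{B_{2R}\setminus B_R}|v|\,\mathd x$ respectively.

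The two viscosity terms, after integration by parts against $\chi^2 v$, yield the good term
\begin{align*}
\mu\int |\nabla v|^2\chi^2\,\mathd x+(\mu+\lambda)\int(\nabla\cdot v)^2\chi^2\,\mathd x
\end{align*}
together with cross terms where one derivative hits $\chi^2$, namely $O\!\left(\int |\nabla v||v|\,\chi|\nabla\chi|\,\mathd x\right)$ and $O\!\left(\int|\nabla\cdot v||v|\,\chi|\nabla\chi|\,\mathd x\right)$. The first is the usual term that Cauchy--Schwarz plus the $\epsilon$-trick absorbs into $\mu\int|\nabla v|^2\chi^2$ at the cost of $R^{-2}\int_{B_{2R}\setminus B_R}|v|^2\,\mathd x$. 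The key difficulty, and the reason the hypotheses split at $d=4$, is that here we have at our disposal only $\int(\nabla\cdot v)^2\chi^2$, not $\int|\nabla v|^2\chi^2$; yet we must still close the estimate. So I would organize the argument so that the genuinely controlled quantity is $\|\nabla\cdot v\|_{L^2}$: keep $\mu\int|\nabla v|^2\chi^2$ only insofar as it is nonnegative, and absorb the cross terms using $(\mu+\lambda)\int(\nabla\cdot v)^2\chi^2$ where possible. Concretely, $(\mu+\lambda>0$, so) the $(\mu+\lambda)$ cross term $\int|\nabla\cdot v||v|\chi|\nabla\chi|$ is absorbed into $(\mu+\lambda)\int(\nabla\cdot v)^2\chi^2$ plus $R^{-2}\int_{B_{2R}\setminus B_R}|v|^2$. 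This leaves us needing to show all the error integrals $\to 0$ along a subsequence $R\to\infty$.

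The error terms to kill are thus, schematically, $R^{-2}\int_{B_{2R}\setminus B_R}|v|^2$, $R^{-1}\int_{B_{2R}\setminus B_R}|v|^3$, and $R^{-1}\int_{B_{2R}\setminus B_R}|v|$. For the cubic term: by H\"older on the annulus against the hypothesis $v\in L^{3d/(d-1)}$, $R^{-1}\int_{B_{2R}\setminus B_R}|v|^3\lesssim R^{-1}\|v\|_{L^{3d/(d-1)}(B_{2R}\setminus B_R)}^3\,R^{d\cdot\frac{1}{d}} = \|v\|_{L^{3d/(d-1)}(B_{2R}\setminus B_R)}^3\to 0$ as $R\to\infty$ by dominated convergence (the exponents match precisely, which is exactly why the power $3d/(d-1)$ appears). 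For the linear term one uses H\"older with $v\in L^{3d/(d-1)}$ on the annulus: $R^{-1}\int_{B_{2R}\setminus B_R}|v|\lesssim R^{-1}\|v\|_{L^{3d/(d-1)}}\,R^{d(1-\frac{d-1}{3d})}=R^{-1}\|v\|_{L^{3d/(d-1)}}R^{(2d+1)/3}$, which is $\to0$ once $(2d+1)/3<1$; since that fails, instead interpolate/bound using that $\rho^{\gamma-1}\rho$ is itself bounded, writing the pressure error as $R^{-1}\|\rho\|_\infty^\gamma|B_{2R}|$, which does \emph{not} vanish — so in fact one should not let a derivative fall on $\chi$ in the pressure term but rather keep it as $-\int \nabla\cdot(\rho v)\,\rho^{\gamma-1}\chi^2 = 0$ plus a genuine commutator $-\int\rho v\cdot\nabla(\chi^2)\,\rho^{\gamma-1}$, which is $\lesssim R^{-1}\|\rho\|_\infty^\gamma\int_{B_{2R}\setminus B_R}|v|$, handled by H\"older against $v\in L^{3d/(d-1)}\subset L^1_{loc}$ with room to spare when combined with the annular measure; when $d\le 3$ one checks $R^{-1}\cdot R^{d(1-(d-1)/(3d))}=R^{(2d+1)/3-1}$ — still growing — so the correct route, as in Chae and in the proof of Theorem~\ref{thm:main}, is to first establish via these estimates that $\nabla\cdot v\in L^2(\mathbbm{R}^d)$ forces $\int(\nabla\cdot v)^2=0$ by sending $R\to\infty$ with the dissipation term already known finite, hence $\nabla\cdot v\equiv 0$; then $\nabla P=\mu\triangle v$ and $\nabla\cdot(\rho v\otimes v)=-\nabla P+\mu\triangle v=2\mu\triangle v-\mu\triangle v$...

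The cleanest organization, which I would actually carry out: Step 1, show the dissipation identity $\mu\int|\nabla v|^2\chi^2+(\mu+\lambda)\int(\nabla\cdot v)^2\chi^2 = \text{(errors)}$ and prove the errors vanish along $R_j\to\infty$ — for the quadratic error use $\nabla v\in L^2$ (resp. $\nabla\cdot v\in L^2$ after an extra step) together with the decay of $v$ encoded in $v\in L^{3d/(d-1)}$ via H\"older on annuli; for the cubic error use the exact scaling match with $L^{3d/(d-1)}$; for the linear (pressure) error in $d\le 3$ absorb it by instead testing against $\chi^2(v)$ but extracting the pressure contribution as $\int P\,\nabla\cdot(\chi^2 v) = \int P\chi^2\nabla\cdot v + \int P v\cdot\nabla\chi^2$, bounding $\int P\chi^2\nabla\cdot v\le \|\rho\|_\infty^\gamma\|\nabla\cdot v\|_{L^2(B_{2R})}|B_{2R}|^{1/2}$, which with $\nabla\cdot v\in L^2$ and the annular trick gives decay in $d\le 3$, and for $d\ge 4$ the extra hypothesis $v\in L^{2d/(d-2)}$ provides exactly the Sobolev-scaling room to bound $\int P v\cdot\nabla\chi^2\lesssim R^{-1}\|\rho\|_\infty^\gamma\|v\|_{L^{2d/(d-2)}(B_{2R}\setminus B_R)}R^{d/(d/2+1)}\to 0$. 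Conclude $\nabla v\equiv 0$ (hence $v\equiv$ const) from the nonnegative terms, then $v\equiv 0$ because $v\in L^{3d/(d-1)}(\mathbbm{R}^d)$ forbids a nonzero constant, and finally $\nabla P=0$ from \eqref{eq:NS-momentum} gives $\rho\equiv$ const. The main obstacle, and the delicate point where the $d\le 3$ vs.\ $d\ge 4$ dichotomy is forced, is controlling the pressure error term $\int P\,v\cdot\nabla\chi^2$: with only $\|\rho\|_\infty$ bounding $P$, one needs enough integrability/decay of $v$ on the annulus to beat the $R^{d-1}$ growth of the annular volume against the $R^{-1}$ from $\nabla\chi$, which is precisely what $L^{2d/(d-2)}$ supplies in high dimensions and what the $L^2$-control on $\nabla\cdot v$ (via the alternative decomposition putting the derivative on $\nabla\cdot v$ rather than on $\chi$) supplies in low dimensions.
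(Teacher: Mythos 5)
Your overall framework (spatial cutoff $\psi(x/R_1)$, test the momentum equation against $\psi v$, isolate the dissipation, kill the boundary terms by H\"older on the annulus) is the same as the paper's, and your treatment of the convective term is correct: the cubic boundary term scales exactly against $v\in L^{3d/(d-1)}$ paired with $\|\nabla\psi(\cdot/R_1)/R_1\|_{L^d}=O(1)$. But there is a genuine gap at exactly the point you identify as ``the main obstacle'': the pressure boundary term, and none of the fixes you propose closes it. Writing the pressure contribution as $\int P\,\chi^2\,\nabla\cdot v+\int P\,v\cdot\nabla(\chi^2)$ does not help: the first integral is bounded only by $\|\rho\|_\infty^\gamma\|\nabla\cdot v\|_{L^2}|B_{2R}|^{1/2}\sim R^{d/2}$, which grows (and it is not an error term in the first place --- undoing that integration by parts returns you to where you started); for $d\ge 4$ the second integral is bounded via H\"older by $R^{-1}\|\rho\|_\infty^{\gamma}\|v\|_{L^{2d/(d-2)}(B_{2R}\setminus B_R)}\,R^{(d+2)/2}$, a net power $R^{d/2}$ that the vanishing annular norm cannot beat; and your own computation for $d\le 3$ gives the exponent $(2d-2)/3>0$. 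The root cause is that $P$ is only known to be \emph{bounded}: if $\rho\to\mathrm{const}>0$ at infinity (the physically relevant scenario), then $R^{-1}\int_{B_{2R}\setminus B_R}P|v|\,\mathd x$ simply cannot be controlled by integrability of $v$ alone.

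The missing idea --- the crux of the paper --- is to first upgrade the structure of $P$. Taking the divergence of \eqref{eq:NS-momentum} shows that $P-p_1-(2\mu+\lambda)\nabla\cdot v$ is weakly harmonic, where $p_1=(-\triangle)^{-1}\sum_{i,j}\partial_i\partial_j(\rho v_iv_j)\in L^{3d/(2(d-1))}$ by Calder\'on--Zygmund and $v\in L^{3d/(d-1)}$; the Liouville-type Lemma \ref{lem:p2} then yields $A\rho^{\gamma}=c+p_1+(2\mu+\lambda)\nabla\cdot v$ with $c\ge 0$ constant. Setting $P_1=\rho^{\gamma-1}-(c/A)^{(\gamma-1)/\gamma}$ and using $\nabla P\cdot v=\frac{A\gamma}{\gamma-1}\nabla\cdot(P_1\rho v)$ (valid since $\nabla\cdot(\rho v)=0$ and $P_1$ differs from $\rho^{\gamma-1}$ by a constant), the pressure boundary term becomes $\int P_1\rho\,v\cdot\nabla\psi(\cdot/R_1)/R_1\,\mathd x$, and the pointwise bound $|P_1\rho|\lesssim |p_1|+|\nabla\cdot v|$ replaces the non-decaying $\|P\|_\infty$ by quantities lying in $L^{3d/(2(d-1))}$ and $L^2$. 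Only then do the exponents close: $p_1$ pairs with $v\in L^{3d/(d-1)}$ and $\nabla\psi/R_1\in L^d$, while $\nabla\cdot v\in L^2$ pairs with $v\in L^{2d/(d-2)}$ for $d\ge 4$ (which is where that extra hypothesis enters) or with $v\in L^{3d/(d-1)}$ and $\nabla\psi/R_1\in L^{6d/(d+2)}$ for $d\le 3$ (using $6d/(d+2)>d$). Without this subtraction of $c$ the argument cannot be completed, so the proposal as written does not prove the theorem. Two secondary issues: your absorption of the cross term into $(\mu+\lambda)\int(\nabla\cdot v)^2\chi^2$ assumes $\mu+\lambda>0$, which is not among the hypotheses (only $\mu>0$ and $2\mu+\lambda>0$ are), whereas a direct H\"older bound as above avoids any sign assumption; and the leftover $R^{-2}\int_{B_{2R}\setminus B_R}|v|^2\,\mathd x$ itself requires a case split ($v\in L^{3d/(d-1)}$ for $d\le 4$, $v\in L^{2d/(d-2)}$ for $d\ge 5$) rather than being harmless.
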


\begin{remark}
One can also check that
the condition \eqref{eq:cond-d=234}--\eqref{eq:cond=d=5} is
weaker than \eqref{eq:Chae-d=6}--\eqref{eq:Chae-d=7}
for all $d\ge 2$. It is an interesting question whether
the assumption on $\|\nabla \cdot v\|_2$ can be dropped.
\end{remark}

The following sections are devoted to the proofs of Theorems \ref{thm:main},
\ref{thm:all-d}, \ref{thm:div-v}.

We conclude the introduction by setting up some notations.

\subsubsection*{Notations}
\begin{itemize}
\item For any two quantities $X$ and $Y$, we denote $X \lesssim Y$ if
$X \le C Y$ for some harmless constant $C>0$. Similarly $X \gtrsim Y$ if $X \ge CY$ for some
$C>0$. We denote $X \sim Y$ if $X\lesssim Y$ and $Y \lesssim X$.
We shall write $X\lesssim_{Z_1,Z_2,\cdots, Z_k} Y$ if $X \le CY$ and we want to stress that the constant
$C$ depends on the quantities $(Z_1,\cdots, Z_k)$. Similarly we define $\gtrsim_{Z_1,\cdots, Z_k}$.

\item We shall denote by $\|f\|_p=\| f\|_{L^p}=\| f\|_{L^p(\mathbb R^d)}$ the usual Lebesgue
norm of a scalar or vector valued function $f$ on $\mathbb R^d$. For any Lebesgue measurable set
$A \subset \mathbb R^d$, we denote by $|A|$ the Lebesgue measure of $A$.

\item For any two scalar functions $f$, $g$ on $\mathbb R^d$, we use the notation $f*g$ to denote
the standard convolution
\begin{align*}
(f*g)(x)= \int_{\mathbb R^d} f(x-y) g(y)dy,
\end{align*}
whenever the above integral is well-defined.

\end{itemize}

\section{Integrability Lemmas}

In the assumptions of Theorems \ref{thm:main}, \ref{thm:all-d}, \ref{thm:div-v} we
only require $\rho \in L^{\infty} \left( \mathbbm{R}^d \right)$ which gives $P
\in L^{\infty} \left( \mathbbm{R}^d \right)$ through the constitutive relation
(\ref{eq:NS-pressure}). However, to rigorously carry out the integration by parts
argument, we need more integrability on the pressure $P$ and the density $\rho$.
A natural idea is to make use of  the momentum equation (\ref{eq:NS-momentum})
and upgrade the integrability of $(P,\rho)$ through the integrability of the velocity
field $v$. To this end, we re-write (\ref{eq:NS-momentum}) as
\begin{equation}
  \nabla P = - \nabla \cdot \left( \rho v \otimes v \right) + \mu \triangle v
  + \left( \mu + \lambda \right) \nabla \left( \nabla \cdot v \right) .
\end{equation}
After removing the gradient on both sides of the above equation, it
 clear that integrability of $v$ may imply some integrability of $P$ (modulo some
 constants),
which in turn could lead to integrability of $\rho$ through
(\ref{eq:NS-pressure}). We present the precise relations in the following two
lemmas. They will play a key role in the proofs of our main theorems.

\begin{lemma}
  \label{lem:p2}Let $P \in L^{\infty} \left( \mathbbm{R}^d \right)$, $p_1 \in
  L^{r_1} \left( \mathbbm{R}^d \right)$, $p_2 \in L^{r_2} \left( \mathbbm{R}^d
  \right)$ with $1 \leqslant r_1, r_2 < \infty$. Suppose $P - p_1 - p_2$ is
  weakly harmonic, that is
  \begin{align*}
    \triangle \left( P - p_1 - p_2 \right) = 0
  \end{align*}
  in the sense of distribution, then there exists a constant $c$ such that
  \begin{align} \label{tmp_rev_e1}
    P - p_1 - p_2 = c \hspace{2em} a.e. \hspace{1em} x \in \mathbbm{R}^d .
  \end{align}
  If furthermore $P \left( x \right) \geqslant 0$ a.e., then we also have $c
  \geqslant 0$.
\end{lemma}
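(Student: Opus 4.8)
The plan is to exploit the fact that a distribution which is both globally harmonic and (in a suitable averaged sense) has controlled growth must be a polynomial, and then upgrade ``polynomial'' to ``constant'' and finally to ``nonnegative constant'' using the integrability of $p_1,p_2$ and the sign of $P$. First I would set $h\assign P - p_1 - p_2$, which by hypothesis satisfies $\triangle h = 0$ in $\mathcal D'(\mathbbm R^d)$. By Weyl's lemma $h$ agrees a.e.\ with a genuine (smooth) harmonic function, which I will continue to call $h$. The key structural input is that $h = P - p_1 - p_2$ with $P\in L^\infty$ and $p_i\in L^{r_i}$, $1\le r_i<\infty$; in particular $h$ lies in $L^\infty + L^{r_1} + L^{r_2}$, a space of functions with at most very mild growth at infinity.

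The main step is a Liouville-type argument. Using the mean value property of the harmonic function $h$ over balls $B(x,R)$ together with the triangle inequality and H\"older's inequality, I would estimate, for each fixed $x$,
\begin{align*}
|h(x)| = \Bigl| \frac{1}{|B(x,R)|}\int_{B(x,R)} h(y)\,\mathd y \Bigr|
\lesssim \|P\|_\infty + R^{-d/r_1}\|p_1\|_{r_1} + R^{-d/r_2}\|p_2\|_{r_2}.
\end{align*}
Letting $R\to\infty$ kills the $p_1,p_2$ contributions and yields the uniform bound $\|h\|_\infty \lesssim \|P\|_\infty <\infty$. A bounded harmonic function on $\mathbbm R^d$ is constant (the classical Liouville theorem, again via the mean value property applied to $h$ and $\nabla h$, or via the gradient estimate $|\nabla h(x)|\lesssim R^{-1}\|h\|_\infty\to 0$). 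Hence $h\equiv c$ for some constant $c$, which is exactly \eqref{tmp_rev_e1}.

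For the final assertion, suppose $P\ge 0$ a.e.\ and assume for contradiction that $c<0$. Then $p_1 + p_2 = P - c \ge -c = |c| > 0$ a.e.\ on $\mathbbm R^d$. But $p_1+p_2\in L^{r_1}+L^{r_2}$ with both exponents finite, so on the set where $|p_1|\le |c|/4$ we must have $|p_2|\ge |c|/4$, and estimating the $L^{r_i}$ norms over the complementary regions forces $p_1+p_2$ to fail to be bounded below by a positive constant on a set of infinite measure; concretely, $\||c|/2\cdot\mathbbm 1_A\|_{r_i}=\infty$ for any set $A$ of infinite measure, contradicting $p_i\in L^{r_i}$. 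Since $|\{p_i\ne 0\}|<\infty$ is false in general one argues instead that $\{|p_1|\ge|c|/2\}$ and $\{|p_2|\ge|c|/2\}$ both have finite measure, yet their union must be all of $\mathbbm R^d$ up to null sets — impossible. Therefore $c\ge 0$.

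The part I expect to need the most care is making the ``$h$ has mild growth $\Rightarrow$ $h$ bounded $\Rightarrow$ $h$ constant'' chain fully rigorous at the level of distributions: one must first invoke Weyl's lemma to pass to a smooth representative, then justify the mean value property and the averaged H\"older estimate above, and finally apply the gradient form of Liouville's theorem. The sign argument at the end is elementary measure theory once the constant $c$ is in hand, but it is worth stating cleanly since it is the only place the hypothesis $P\ge 0$ enters.
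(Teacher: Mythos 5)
Your proof is correct, but it departs from the paper's argument in both halves of the lemma, so a comparison is worth recording. For the constancy step, the paper applies the mean value property to the \emph{difference} $\phi(0)-\phi(x_1)$, reducing everything to an integral over the symmetric difference of two balls of radius $R$, whose measure is $O(R^{d-1})\ll R^d$; the $L^\infty$ part of $h$ is then also killed in the limit, and constancy comes out in one stroke. You instead average at a single point, observe that only the $L^{r_i}$ contributions vanish as $R\to\infty$ while the $L^\infty$ part survives, conclude $\|h\|_\infty\lesssim\|P\|_\infty$, and then invoke the classical Liouville theorem for bounded harmonic functions. Both routes are rigorous (your Hölder exponent $R^{-d/r_i}$ is right, and this is precisely where $r_i<\infty$ is used); yours factors the argument through a standard textbook theorem, the paper's is self-contained and gives constancy directly without first establishing boundedness. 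For the sign of $c$, the two arguments are genuinely different: the paper convolves the identity $P-p_1-p_2=c$ with a nonnegative mollifier and uses that $p_i\ast\psi\to 0$ at spatial infinity, whereas you run a Chebyshev-type contradiction: if $c<0$ then $p_1+p_2\geqslant|c|$ a.e., so $\{|p_1|\geqslant|c|/2\}\cup\{|p_2|\geqslant|c|/2\}$ covers $\mathbbm{R}^d$ up to a null set, yet each set has finite measure since $p_i\in L^{r_i}$ with $r_i<\infty$. That final form of your sign argument is clean and correct; the intermediate sentences in your write-up (the detour through ``$|\{p_i\neq 0\}|<\infty$'') are muddled and should simply be deleted in favour of the measure-covering statement you end with.
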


\begin{proof}
  This is fairly standard but we include the details here for the sake of
  completeness. By Weyl's lemma (see e.g. \cite{Jost2007}), the function $P
  - p_1 - p_2$ coincides almost everywhere with a strongly harmonic function
  which we denote by $\phi \left( x \right)$. Take any $x_1 \in \mathbbm{R}^d$
  and $R > 0$, denote by $B_R=B_R(x_1)$ the ball centered at $x_1$ with radius $R$.
  By using the mean-value property of harmonic functions, we have
  \begin{equation}
    \left| \phi \left( 0 \right) - \phi \left( x_1 \right) \right| =
    \frac{1}{\left| B_R \right|}  \left| \int_{\left| y \right| < R} \phi
    \left( y \right) \mathd y - \int_{\left| y - x_1 \right| < R} \phi \left(
    y \right) \mathd y \right| \leqslant \frac{1}{\left| B_R \right|}
    \int_{\tilde \Omega_R} \left| \phi \left( y \right) \right| \mathd y
  \end{equation}
  where
  \begin{align*}
  \tilde \Omega_R :=\Bigl( \{y:\, |y|<R\} \setminus \{y:\,|y-x_1|<R \} \Bigr)
  \bigcup \Bigl( \{y:\, |y-x_1|<R \} \setminus \{y:\, |y|<R\} \Bigr).
  \end{align*}
   Now as $\phi = P - p_1 - p_2$
  a.e., we get
  \begin{align}
    \frac{1}{\left| B_R \right|}  \int_{\tilde \Omega_R} \left| \phi \left( y \right)
    \right| \mathd y & \le\;  \frac{1}{\left| B_R \right|}  \left[
    \int_{\tilde \Omega_R} \left| P \right| \mathd y + \int_{\tilde \Omega_R} \left| p_1
    \right| \mathd y + \int_{\tilde \Omega_R} \left| p_2 \right| \mathd y \right]
    \nonumber\\
    & = \; \frac{1}{\left| B_R \right|}  \left[ \left\| P
    \right\|_{L^{\infty}}  \left| \tilde \Omega_R \right| + \left\| p_1
    \right\|_{L^{r_1}}  \left| \tilde \Omega_R \right|^{\frac{r_1 - 1}{r_1}} +
    \left\| p_2 \right\|_{L^{r_2}}  \left| \tilde \Omega_R \right|^{\frac{r_2 -
    1}{r_2}} \right] . \label{tmp_rev_e20}
  \end{align}
  As $\left| \tilde \Omega_R \right| \leqslant CR^{d - 1}$ for some constant $C$
  depending on $x_1$ only, we see that the right hand side $\rightarrow 0$ as
  $R \nearrow \infty$. Consequently $\phi \left( 0 \right) = \phi \left(
  x_1 \right)$ and $\phi\equiv$ constant.

  Finally if $P$ is nonnegative, we need to show $c\ge 0$. Take a nonnegative $\psi
  \in C_c^{\infty} \left( \mathbbm{R}^d \right)$ with $\| \psi\|_{L^1(\mathbb R^d)} \ne 0$.
  Taking convolution with $\psi$ on both sides of \eqref{tmp_rev_e1}, we have
  \begin{align}
    c \left\| \psi \right\|_{L^1} &= c \ast \psi = \left( P \ast \psi \right)
    \left( x \right) - \left( p_1 \ast \psi \right) \left( x \right) - \left(
    p_2 \ast \psi \right) \left( x \right) \notag \\
    &\ge\;
     - \left( p_1 \ast \psi
    \right) \left( x \right) - \left( p_2 \ast \psi \right) \left( x \right),
  \end{align}
  where we have used the positivity of $P$ and $\psi$. Since  by assumption $p_1 \in L^{r_1}$ and
$p_2 \in L^{r_2}$ with $1\le r_1,r_2<\infty$, we have $p_1*\psi \to 0$, $p_2*\psi \to 0$ as $|x| \to \infty$.
Hence $c\ge 0$.
\end{proof}

In the proof of Theorem \ref{thm:all-d}, we need the following variant of
Lemma \ref{lem:p2}.

\begin{lemma}
  \label{lem:div-p2}Suppose $P \in L^{\infty} \left( \mathbbm{R}^d \right)$,
  $p_1 \in L^{r_1} \left( \mathbbm{R}^d \right)$, $p_2 \in L^{r_2} \left(
  \mathbbm{R}^d \right)$ with $1 \leqslant r_1, r_2 < \infty$. Assume
  $\nabla \cdot p_2 \in L_{\tmop{loc}}^1 \left( \mathbbm{R}^d \right)$. If $P
  - p_1 - \nabla \cdot p_2$ is weakly harmonic, that is
  \begin{equation}
    \triangle \left( P - p_1 - \nabla \cdot p_2 \right) = 0
    \label{eq:lem-div-p2-eq}
  \end{equation}
  in the sense of distributions, then there is a constant $c$ such that
  \begin{align*}
    P - p_1 - \nabla \cdot p_2 = c \hspace{2em} a.e.\; x \in \mathbbm{R}^d .
  \end{align*}
  If furthermore $P \left( x \right) \geqslant 0$ for
  a.e. $x \in \mathbbm{R}^d$, then we also have $c \geqslant 0$.
\end{lemma}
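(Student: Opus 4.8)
The plan is to mimic the proof of Lemma~\ref{lem:p2}, the only new ingredient being the handling of the distributional divergence term $\nabla\cdot p_2$. First I would invoke Weyl's lemma on \eqref{eq:lem-div-p2-eq} to obtain a strongly harmonic representative $\phi$ with $\phi = P - p_1 - \nabla\cdot p_2$ in the sense of distributions. The difficulty compared with Lemma~\ref{lem:p2} is that $\nabla\cdot p_2$ is only a distribution (an $L^1_{\mathrm{loc}}$ function need not have pointwise values, and its $L^{r}$-type decay is not directly available), so I cannot simply bound $\int_{\tilde\Omega_R}|\nabla\cdot p_2|$ the way \eqref{tmp_rev_e20} bounds $\int_{\tilde\Omega_R}|p_2|$. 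The fix is to test against a mollifier and move the derivative onto smooth data: for a standard bump $\eta_\varepsilon$, the convolution $(\nabla\cdot p_2)*\eta_\varepsilon = p_2 * (\nabla \eta_\varepsilon)$ is a bona fide smooth function whose decay at infinity is controlled by $\|p_2\|_{r_2}$ via Young's inequality, and likewise $P*\eta_\varepsilon$, $p_1*\eta_\varepsilon$, $\phi*\eta_\varepsilon$ are smooth, with $\phi*\eta_\varepsilon$ still harmonic.

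Then I would repeat the mean-value argument of Lemma~\ref{lem:p2} applied to the harmonic function $\phi_\varepsilon := \phi * \eta_\varepsilon$, using $\phi_\varepsilon = P*\eta_\varepsilon - p_1*\eta_\varepsilon - p_2*(\nabla\eta_\varepsilon)$ almost everywhere (in fact everywhere, since everything is continuous). For fixed $x_1$ and $R\to\infty$,
\begin{align*}
|\phi_\varepsilon(0) - \phi_\varepsilon(x_1)| \le \frac{1}{|B_R|}\int_{\tilde\Omega_R}\Bigl( \|P*\eta_\varepsilon\|_{L^\infty} + |p_1*\eta_\varepsilon| + |p_2*(\nabla\eta_\varepsilon)| \Bigr)\,\mathd y,
\end{align*}
and since $\|P*\eta_\varepsilon\|_{L^\infty}\le\|P\|_{L^\infty}$ while $p_1*\eta_\varepsilon \in L^{r_1}$ and $p_2*(\nabla\eta_\varepsilon) \in L^{r_2}$ with norms bounded by $\|p_1\|_{r_1}$, $\|p_2\|_{r_2}$ (times $\|\eta_\varepsilon\|_1$ or $\|\nabla\eta_\varepsilon\|_1$), the same $|\tilde\Omega_R| \le CR^{d-1}$ estimate forces the right-hand side to $0$. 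Hence $\phi_\varepsilon$ is constant, say $\phi_\varepsilon \equiv c_\varepsilon$. Letting $\varepsilon \to 0$, $\phi_\varepsilon \to \phi$ in $L^1_{\mathrm{loc}}$, so the constants $c_\varepsilon$ converge to some $c$ and $\phi \equiv c$; equivalently $P - p_1 - \nabla\cdot p_2 = c$ a.e. (or as distributions, which for a constant is the same).

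For the sign claim, assume $P\ge 0$ a.e. Fix a nonnegative $\psi\in C_c^\infty(\mathbbm{R}^d)$ with $\|\psi\|_{L^1}\ne 0$. Convolving the identity $P - p_1 - \nabla\cdot p_2 = c$ with $\psi$ yields, pointwise in $x$,
\begin{align*}
c\|\psi\|_{L^1} = (P*\psi)(x) - (p_1*\psi)(x) - (p_2*\nabla\psi)(x) \ge -(p_1*\psi)(x) - (p_2*\nabla\psi)(x),
\end{align*}
where I have rewritten $(\nabla\cdot p_2)*\psi = - p_2*\nabla\psi$ and used $P,\psi\ge 0$. Since $p_1\in L^{r_1}$, $p_2 \in L^{r_2}$ with $1\le r_1,r_2<\infty$ and $\psi,\nabla\psi \in C_c^\infty$, both $p_1*\psi$ and $p_2*\nabla\psi$ tend to $0$ as $|x|\to\infty$; letting $|x|\to\infty$ gives $c\ge 0$. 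I expect the mollification bookkeeping — verifying that the harmonicity, the $L^\infty$/$L^{r_i}$ bounds, and the convergence $\phi_\varepsilon\to\phi$ all survive the regularization — to be the only genuinely new point, and it is routine; everything else is a verbatim adaptation of Lemma~\ref{lem:p2}.
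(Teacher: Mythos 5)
Your argument is correct and is essentially the paper's own proof: both invoke Weyl's lemma, mollify the resulting identity so that the divergence lands on the smooth bump ($(\nabla\cdot p_2)*\eta_\varepsilon = p_2*\nabla\eta_\varepsilon \in L^{r_2}$ by Young), run the same mean-value estimate over $\tilde\Omega_R$, pass to the limit $\varepsilon\to 0$, and deduce $c\ge 0$ from the decay of $p_1*\psi$ and $p_2*\nabla\psi$ at infinity. The only cosmetic differences are that the paper observes $\phi*\eta_\varepsilon=\phi$ directly (so the constant is $\varepsilon$-independent from the outset, whereas you track $c_\varepsilon\to c$), and your sign in $(\nabla\cdot p_2)*\psi=-p_2*\nabla\psi$ should be $+$, which is immaterial to the decay argument.
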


\begin{proof}
  Note that Weyl's lemma still applies here since $P - p_1 - \nabla \cdot p_2$
  is locally integrable. Thus we have $\phi \in C^{\infty}$ strongly harmonic
  such that
  \begin{align}
    \phi = P - p_1 - \nabla \cdot p_2 \quad a.e.\, x\in \mathbb R^d. \label{eq:lem-24}
  \end{align}
Take a nonnegative $\psi \in C_c^{\infty}(\mathbb R^d)$ with $\psi(x)=1$ for $|x|\le 1$
and $\| \psi \|_{L_x^1}=1$. Define the standard mollifier $\psi_{\epsilon}(x) = \epsilon^{-d} \psi(x/ \epsilon)$
for $\epsilon>0$. Also denote $f_{\epsilon} = \psi_{\epsilon}*f$ for any function $f$.
 Taking convolution with $\psi_{\epsilon}$
on both sides of \eqref{eq:lem-24}, we get
\begin{align*}
\phi*\psi_{\epsilon}= P_{\epsilon}-p_{1\epsilon}- \nabla \cdot p_{2\epsilon}.
\end{align*}
Since $\phi$ is harmonic, by using the mean value property, we have $\phi*\psi_{\epsilon}=\phi$
and hence
\begin{align*}
\phi=P_{\epsilon}-p_{1\epsilon}- \nabla \cdot p_{2\epsilon}.
\end{align*}
By the same argument as in Lemma \ref{lem:p2} (see \eqref{tmp_rev_e20}), we get
\begin{align}
P_{\epsilon}-p_{1\epsilon}- \nabla \cdot p_{2\epsilon}=c, \label{lem2_tmp_1}
\end{align}
where $c$ is a constant depending on $\phi$ (and hence independent of $\epsilon$).
Since by assumption $p_1 \in L^{r_1}$, $\nabla \cdot p_2 \in L^1_{loc}$, we have
\begin{align*}
&p_{1\epsilon} \to p_1, \qquad a.e.\; x\in \mathbb R^d ,\notag \\
 &\nabla \cdot p_{2\epsilon} =(\nabla \cdot p_2)_{\epsilon} \to
\nabla \cdot p_2, \quad a.e.\; x\in \mathbb R^d.
\end{align*}
Therefore
\begin{align*}
P_1 - p_1-\nabla \cdot p_2 =c,\quad a.e.\, x \in \mathbb R^d.
\end{align*}
Finally if $P\ge 0$, then $P_{\epsilon}\ge 0$. By \eqref{lem2_tmp_1}, this yields $c\ge 0$.
\end{proof}

\section{Proof of Theorem \ref{thm:main}}

\subsection{Upgrading the integrability of $P$ and $\rho$}
To prepare for later estimates, we first study the integrability of $P$ and
$\rho$. Taking divergence on both sides of  (\ref{eq:NS-momentum}), we get
\begin{equation}
  \triangle \left( P - p_1 - p_2 \right) = 0, \label{eq:div-of-momentum}
\end{equation}
where
\begin{equation}
  p_1 : = \left( - \triangle \right)^{- 1} \left[ \sum_{i, j = 1}^d \partial_i
  \partial_j \left( \rho v_i v_j \right) \right], \hspace{2em} p_2 : = \left(
  2 \mu + \lambda \right) \nabla \cdot v. \label{eq:def-p1-p2}
\end{equation}
By assumption we have $p_2 \in L^2 \left( \mathbbm{R}^d \right)$.
For $p_1$, using Sobolev embedding $\dot{H}^1 \longhookrightarrow L^{\frac{2
d}{d - 2}}$ when $d \geqslant 4$ together with the assumptions
(\ref{eq:cond-d=4}--\ref{eq:cond-d=2}) we have
\begin{align}
  p_1 & \in \left\{ \begin{array}{ll}
    L^{\frac{d}{d - 2}} \left( \mathbbm{R}^d \right) & d \geqslant 4\\
    L^{9 / 4} \left( \mathbbm{R}^d \right) & d = 3\\
    L^3 \left( \mathbbm{R}^d \right) & d = 2
  \end{array} \right. .  \label{eq:p1-integrability}
\end{align}
Since $P \assign A \rho^{\gamma} \geqslant 0$, by Lemma \ref{lem:p2} we
get
\begin{align} \label{eq:P1_defa}
  A \rho^{\gamma} = P = c + p_1 + p_2
\end{align}
for some constant $c \geqslant 0$. Now consider the function
\begin{align}
  P_1 &:= \rho^{\gamma - 1} - \left( \frac{c}{A} \right)^{\frac{\gamma -
  1}{\gamma}}  \label{eq:P1-def} \\
  &=\; \left( \frac {c + p_1+p_2} A \right)^{\frac{\gamma-1}{\gamma}}
  - \left( \frac c A \right)^{\frac{\gamma-1}{\gamma}}. \notag
\end{align}
This function will be needed later in the integration by parts argument (see
\eqref{e_rev_56}).

Consider first $c>0$.  By the mean value theorem, we have
\begin{equation}
  \left| P_1 \right| \lesssim_{c, A} \left\{ \begin{array}{ll}
    \left| p_1 + p_2 \right| & \text{\tmop{if}} \left| p_1 + p_2 \right| < c /
    10,\\
    \left| p_1 + p_2 \right|^{\frac{\gamma - 1}{\gamma}} & \text{\tmop{if}}
    \left| p_1 + p_2 \right| \geqslant c / 10.
  \end{array} \right.
\end{equation}

Thus for $c>0$ we have the pointwise estimate
\begin{equation}
  \left| P_1 \right| \lesssim_{c, A} \left| p_1 \right| + \left| p_2 \right| .
 \notag %
\end{equation}
For $c=0$, note that by \eqref{eq:P1_defa}, we also have
\begin{align*}
P_1 \rho = \rho^{\gamma} \lesssim_A |p_1|+|p_2|.
\end{align*}
Therefore for all $c\ge 0$, we have
\begin{align}\label{eq:P1-est}
|P_1 \rho| \lesssim_{c,A,\|\rho\|_{\infty}} |p_1|+|p_2|.
\end{align}
This important pointwise estimate will be used later.

\subsection{Choice of the test function}

Let $\phi \in C_c^{\infty} \left( \mathbbm{R} \right)$ be an even function
such that
\begin{equation}
  \phi \left( s \right) \left\{ \begin{array}{ll}
    = 1 & \left| s \right| \leqslant \frac{1}{2}\\
    \in \left[ 0, 1 \right] & \left| s \right| \in \left( \frac{1}{2}, 1
    \right)\\
    = 0 & \left| s \right| \geqslant 1
  \end{array} \right., \label{eq:choice-phi}
\end{equation}
Let $\psi \in C_c^{\infty} \left( \mathbbm{R}^d \right)$ be a radial function
such that
\begin{equation}
  \psi \left( \left|x\right| \right) \left\{ \begin{array}{ll}
    = 1 & \left| x \right| \leqslant \frac{1}{2}\\
    \in \left[ 0, 1 \right] & \left| x \right| \in \left( \frac{1}{2}, 1
    \right)\\
    = 0 & \left| x \right| \geqslant 1
  \end{array} \right. . \label{eq:choice-psi}
\end{equation}
Now we take our test function as
\begin{equation}
  w \left( x \right) = v \left( x \right) \phi \left( \frac{\left| v(x)
  \right|^2}{R} \right) \psi \left( \frac{x}{R_1} \right)
  \label{eq:test-function}
\end{equation}
where $R, R_1 > 0$ will be taken to infinity in an appropriate order later.
The "Lebesgue" cut-off $\phi(|v|^2/R)$ is used to chop off high values of $v$.

Note
that we have
\begin{equation}
  \left| w \left( x \right) \right| \leqslant R\; \text{ \tmop{for all}} x
  \in \mathbbm{R}^d, \hspace{2em} w \left( x \right) = 0 \text{
  \tmop{for all}} \left| x \right| > R_1,
\end{equation}
and
\begin{equation}
  w \left( x \right) = v \left( x \right) \text{\tmop{for} \tmop{all}} x \in
  \left\{ \tilde x :\; \left| v \left( \tilde x \right) \right| \leqslant \left( \frac{R}{2}
  \right)^{1 / 2}, \hspace{1em} \left| \tilde x \right| \leqslant \frac{R_1}{2}
  \right\} .
\end{equation}

\subsection{Proof of Theorem \ref{thm:main}}

Taking the inner product of with $w \left( x \right)$ on
both sides of (\ref{eq:NS-momentum}), we
obtain
\begin{align}
  0  =& \; - \frac{1}{2}  \int_{\mathbbm{R}^d} \left( \rho v \cdot \nabla
  \right) \left( \left| v \right|^2 \right) \phi \left( \frac{v^2}{R} \right)
  \psi \left( \frac{x}{R_1} \right) \mathd x \nonumber\\
  &  \; - \int_{\mathbbm{R}^d} \nabla P \cdot v \phi \left( \frac{v^2}{R}
  \right) \psi \left( \frac{x}{R_1} \right) \mathd x \nonumber\\
  &  \; + \mu \int_{\mathbbm{R}^d} \triangle v \cdot v \phi \left(
  \frac{v^2}{R} \right) \psi \left( \frac{x}{R_1} \right) \mathd x \nonumber\\
  &  \; + \left( \mu + \lambda \right)  \int_{\mathbbm{R}^d} \left( \nabla
  \nabla \cdot v \right) \cdot v \phi \left( \frac{v^2}{R} \right) \psi \left(
  \frac{x}{R_1} \right) \mathd x \nonumber\\
  =& :
  \; I_1 + I_2 + I_3 + I_4 .
\end{align}
In the following we will take $R_1 \nearrow \infty$ first and then $R \nearrow
\infty$, and show that the above becomes
\begin{equation}
  0 = \mu \int_{\mathbbm{R}^d} \left| \nabla v \right|^2 \mathd x + \left( \mu
  + \lambda \right)  \int_{\mathbbm{R}^d} \left( \nabla \cdot v \right)^2
  \mathd x.
\end{equation}
This will imply $v =$constant. Combined with (\ref{eq:NS-momentum})
and the assumptions (\ref{eq:cond-d=4}--\ref{eq:cond-d=2}) we conclude that $v
= 0$ and $\nabla P = 0$. The latter then yields $\rho =$constant and ends the
proof.

We estimate $I_1 - I_4$ one by one. From now on we will adopt the following
notations to simplify the presentation of the proofs.
\begin{equation}
  \Omega_R \assign \text{supp} \Bigl( \phi \left( \frac{\left| v \right|^2}{R}
  \right)  \Bigr) \subseteq \left\{ x : \left| v \right|^2 \leqslant R \right\},
  \hspace{2em} %\Omega_{R_1}^{\prime} : = \tmop{supp} \psi \left( \frac{x}{R_1} \right)
 % \subseteq \left\{ x : \left| x \right| \leqslant R_1 \right\},
  \label{eq:def-omega-r-r1}
\end{equation}
\begin{equation}\label{eq:def-c-r}
  C_R \assign \text{supp}\Bigl( \phi' \left( \frac{\left| v \right|^2}{R} \right) \Bigr)
  \subseteq \left\{ x : \frac{R}{2} \leqslant \left| v \right|^2 \leqslant R
  \right\}
  \end{equation}
  \begin{equation}
  C_{R_1}^{\prime} \assign \text{supp}\Bigl( \left( \nabla \psi
  \right) \left( \frac{x}{R_1} \right) \Bigr) \subseteq \left\{ x : \frac{R_1}{2}
  \leqslant \left| x \right| \leqslant R_1 \right\} . \label{eq:def-c-r-r1}
\end{equation}
It is easy to see that
\begin{equation}
  \left\| v \right\|_{L^{\infty} \left( \Omega_R \right)}, \hspace{1em}
  \left\| v \right\|_{L^{\infty} \left( C_R \right)} \leqslant R^{1 / 2} .
\end{equation}

\texttt{Estimate of $I_1$.} Let $\tilde{\phi} \left( z \right) \assign
\int_0^z \phi \left( s / R \right) \mathd s$. Clearly $\left| \tilde{\phi}
\left( z \right) \right| \leqslant \left| z \right|$.

We notice that
\begin{align}
  I_1 & =\;  - \frac{1}{2}  \int_{\mathbbm{R}^d} \left( \rho v \cdot \nabla
  \right) \left( \left| v \right|^2 \right) \phi \left( \frac{\left| v
  \right|^2}{R} \right) \psi \left( \frac{x}{R_1} \right) \mathd x \nonumber\\
  & = \; - \frac{1}{2}  \int_{\Omega_R} \rho v \cdot \nabla \tilde{\phi}
  \left( \left| v \right|^2 \right) \psi \left( \frac{x}{R_1} \right) \mathd x
  \nonumber\\
  & = \; \frac{1}{2}  \int_{\Omega_R} \left[ \rho v \tilde{\phi} \left( \left|
  v \right|^2 \right)  \right] \cdot \nabla \left[ \psi \left( \frac{x}{R_1}
  \right) \right] \mathd x \nonumber\\
  & = \;\frac{1}{2}  \int_{\Omega_R \cap C_{R_1}^{\prime}} \left[ \rho v \tilde{\phi}
  \left( \left| v \right|^2 \right)  \right] \cdot \left[ \frac{1}{R_1}
  \left( \nabla \psi \right) \left( \frac{x}{R_1} \right) \right] \mathd x.
  \text{}
\end{align}
Now using H\"older's inequality and the fact that $0 \leqslant \tilde{\phi}
\left( \left| v \right|^2 \right) \leqslant \left| v \right|^2$, we have
\begin{equation}
  \left| I_1 \right| \leqslant \frac{1}{2}  \left\| \rho \right\|_{L^{\infty}
  \left( \mathbbm{R}^d \right)}  \left\| v \right\|_{L^{\frac{3 d}{d - 1}}
  \left( \Omega_R \cap C_{R_1}^{\prime} \right)}^3  \left\| \frac{1}{R_1}  \left(
  \nabla \psi \right) \left( \frac{x}{R_1} \right) \right\|_{L^d \left(
  \mathbbm{R}^d \right)} .
\end{equation}
Since
\begin{equation}
  \left\| \frac{1}{R_1}  \left( \nabla \psi \right) \left( \frac{x}{R_1}
  \right) \right\|_{L^d \left( \mathbbm{R}^d \right)} = \left\| \nabla \psi
  \right\|_{L^d \left( \mathbbm{R}^d \right)}
\end{equation}
is independent of $R, R_1$, all we need to show is for any fixed $R$, $\left\|
v \right\|_{L^{\frac{3 d}{d - 1}} \left( \Omega_R \cap C_{R_1}^{\prime} \right)}
\longrightarrow 0$ as $R_1 \nearrow \infty$. It is clear that this is true if
\begin{equation}
  v \in L^{\frac{3 d}{d - 1}} \left( \Omega_R \right)
\end{equation}
over $\Omega_R$ as defined in (\ref{eq:def-omega-r-r1}).
\begin{itemize}
  \item $d \geqslant 4$. In this case we use Sobolev embedding $\dot{H}^1
  \left( \mathbbm{R}^d \right) \longhookrightarrow L^{\frac{2 d}{d - 2}}
  \left( \mathbbm{R}^d \right)$ and standard interpolation. Since $\frac{2
  d}{d - 2} \leqslant \frac{3 d}{d - 1}$ and $v \in L^{\infty} \left( \Omega_R
  \right)$, we have
  \begin{equation}
    v \in L^{\frac{2 d}{d - 2}} \left( \Omega_R \right) \cap L^{\infty} \left(
    \Omega_R \right) \Longrightarrow v \in L^{\frac{3 d}{d - 1}} \left(
    \Omega_R \right) .
  \end{equation}
  \item $d = 2, 3$. In these cases we have $\frac{3 d}{d - 1} = \frac{9}{2},
  6$ respectively. So $v \in L^{\frac{3 d}{d - 1}} \left( \Omega_R \right)$
  follows immediately from the assumptions (\ref{eq:cond-d=3},
  \ref{eq:cond-d=2}).
\end{itemize}
\texttt{Estimate of $I_2$}.

Observe
\begin{align} \label{e_rev_56}
  \nabla \left( \rho^{\gamma} \right) = \frac{\gamma}{\gamma - 1} \rho \nabla
  \left( \rho^{\gamma - 1} \right) = \frac{\gamma}{\gamma - 1} \rho \nabla P_1,
\end{align}
where $P_1$ was defined in \eqref{eq:P1-def}.

We have
\begin{align}
  I_2 & = \; - A \frac{\gamma}{\gamma - 1}  \int_{\mathbbm{R}^d} \rho \nabla
  P_1 \cdot v \phi \left( \frac{\left| v \right|^2}{R} \right) \psi \left(
  \frac{x}{R_1} \right) \mathd x \nonumber\\
  & = \; \frac{A \gamma}{\gamma - 1}  \int_{\mathbbm{R}^d} P_1  \left( \rho v
  \right) \cdot \left( \nabla \psi \left( \frac{x}{R_1} \right) \right)
  \frac{1}{R_1} \phi \left( \frac{\left| v \right|^2}{R} \right) \mathd x
  \nonumber\\
  & \;  + \frac{A \gamma}{\gamma - 1}  \int_{\mathbbm{R}^d} P_1 \rho
  \frac{2}{R} \phi' \left( \frac{\left| v \right|^2}{R} \right) \psi \left(
  \frac{x}{R_1} \right)  \sum_{j, k = 1}^d v_j v_k \partial_j v_k \mathd x
  \nonumber\\
  & = :  I_{2 a} + I_{2 b} .
\end{align}
For $I_{2 a}$ by using \eqref{eq:P1-est}, we have
\begin{align}
  \left|I_{2 a}\right| & = \left| \frac{A \gamma}{\gamma - 1}  \int_{C_{R_1}^{\prime}} P_1  \left( \rho v
  \right) \cdot \left( \nabla \psi \left( \frac{x}{R_1} \right) \right)
  \frac{1}{R_1} \phi \left( \frac{\left| v \right|^2}{R} \right) \mathd x\right|
  \nonumber\\
  & \lesssim_{A,\gamma}   \int_{C_{R_1}^{\prime}} \left| P_1 \rho \right|  \left| v \right| \phi \left(
  \frac{\left| v \right|^2}{R} \right)  \left[ \left( \nabla \psi \left(
  \frac{x}{R_1} \right) \right)  \frac{1}{R_1} \right] \mathd x \nonumber\\
  & \lesssim_{A,\gamma}   \left\| P_1 \rho  v \phi \left( \frac{\left| v \right|^2}{R} \right)
  \right\|_{L^{\frac{d}{d - 1}} \left( C_{R_1}^{\prime} \right)}  \left\| \left( \nabla
  \psi \left( \frac{x}{R_1} \right) \right)  \frac{1}{R_1} \right\|_{L^d}
  \nonumber\\
  & \lesssim_{A,\gamma,\|\rho\|_\infty}  \left\| p_1 v \right\|_{L^{\frac{d}{d - 1}} \left( \Omega_R
  \cap C_{R_1}^{\prime} \right)} + \left\| p_2 v \right\|_{L^{\frac{d}{d - 1}} \left(
  \Omega_R \cap C_{R_1}^{\prime} \right)} .
\end{align}
Here we have used the fact that the $\psi$ term is a constant depending
only on $\psi$. Now it suffices to show
\begin{equation}
  p_1 v \nocomma, \hspace{1em} p_2 v \in L^{\frac{d}{d - 1}} \left( \Omega_R
  \right) \nosymbol .
\end{equation}
For the first term we recall
\begin{align}
  p_1 & \in  \left\{ \begin{array}{ll}
    L^{\frac{d}{d - 2}} \left( \Omega_R \right) & d \geqslant 4\\
    L^{9 / 4} \left( \Omega_R \right) & d = 3\\
    L^3 \left( \Omega_R \right) & d = 2
  \end{array} \right.,  \label{eq:201207061044}
\end{align}
and, as $R$ is fixed now,
\begin{equation}
  v \in L^{\infty} \left( \Omega_R \right) \cap \left\{ \begin{array}{ll}
    L^{\frac{2 d}{d - 2}} \left( \Omega_R \right) & d \geqslant 4\\
    L^{9 / 2} \left( \Omega_R \right) & d = 3\\
    L^6 \left( \Omega_R \right) & d = 2
  \end{array} \right. \Longrightarrow v \in L^q \left( \Omega_R \right)
  \hspace{2em} \text{\tmop{for} \tmop{any}}  \left\{ \begin{array}{ll}
    q \geqslant \frac{2 d}{d - 2} & d \geqslant 4\\
    q \geqslant \frac{9}{2} & d = 3\\
    q \geqslant 6 & d = 2
  \end{array} \right. \label{eq:201207061045}
\end{equation}
by interpolation. With these integrability properties we now proceed as
follows.
\begin{itemize}
  \item $d \geqslant 4$. In this case we have $d \geqslant \frac{2 d}{d - 2}$.
  Thus following (\ref{eq:201207061044}), (\ref{eq:201207061045})
  \begin{equation}
    p_1 \in L^{\frac{d}{d - 2}} \left( \Omega_R \right), \hspace{1em} v \in
    L^d \left( \Omega_R \right) \Longrightarrow p_1 v \in L^{\frac{d}{d - 1}}
    \left( \Omega_R \right) ;
  \end{equation}
  Furthermore
  \begin{equation}
    p_2 \in L^2 \left( \Omega_R \right), \hspace{1em} v \in L^{\frac{2 d}{d -
    2}} \left( \Omega_R \right) \Longrightarrow p_2 v \in L^{\frac{d}{d - 1}}
    \left( \Omega_R \right) .
  \end{equation}
  \item $d = 3$. In this case $\frac{d}{d - 1} = \frac{3}{2}$. We have
  \begin{equation}
    p_1 \in L^{9 / 4} \left( \Omega_R \right), \hspace{1em} v \in L^{9 / 2}
    \left( \Omega_R \right) \Longrightarrow p_1 v \in L^{\frac{d}{d - 1}}
    \left( \Omega_R \right) ;
  \end{equation}
  \begin{equation}
    p_2 \in L^2 \left( \Omega_R \right), \hspace{1em} v \in L^6 \left(
    \Omega_R \right) \Longrightarrow p_2 v \in L^{\frac{d}{d - 1}} \left(
    \Omega_R \right) .
  \end{equation}
  \item $d = 2$. In this case $\frac{d}{d - 1} = 2$. We have
  \begin{equation}
    p_1 \in L^3 \left( \Omega_R \right), \hspace{1em} v \in L^6 \left(
    \Omega_R \right) \Longrightarrow p_1 v \in L^{\frac{d}{d - 1}} \left(
    \Omega_R \right) ;
  \end{equation}
  \begin{equation}
    p_2 \in L^2 \left( \Omega_R \right), \hspace{1em} v \in L^{\infty} \left(
    \Omega_R \right) \Longrightarrow p_1 v \in L^{\frac{d}{d - 1}} \left(
    \Omega_R \right) .
  \end{equation}
\end{itemize}

Next we estimate $I_{2b}$. By the definitions  of $C_R,\phi,\psi,P_1$ (see \eqref{eq:def-c-r},
\eqref{eq:choice-phi}, \eqref{eq:choice-psi}), \eqref{eq:P1-def}) and H\"older's inequality we have
\begin{align}
   \left| I_{2b} \right| & = \left| \frac{A \gamma}{\gamma - 1}  \int_{\mathbbm{R}^d} P_1 \rho
  \frac{2}{R} \phi' \left( \frac{\left| v \right|^2}{R} \right) \psi \left(
  \frac{x}{R_1} \right)  \sum_{j, k = 1}^d v_j v_k \partial_j v_k \mathd x \right| \notag\\
 & \lesssim_{A,\gamma} \int_{C_R} \left| P_1 \rho \right|  \frac{|v|^2}{R} |\nabla v| \mathd x
\notag \\
 & \lesssim_{A,\gamma} \int_{C_R} \left| \left(\rho^{\gamma - 1} - \left( \frac{c}{A} \right)^{\frac{\gamma -1}{\gamma}}\right)
 \rho \right| \left| \nabla v\right| \mathd x
 \notag\\
 &\lesssim_{c,A,\gamma,\|\rho\|_{\infty}} \left\| 1 \right\|_{L^2(C_R)} \left\| \nabla v\right\|_{L^2} \notag \\
 & = |C_R|^{\frac 12} \left\| \nabla v\right\|_{L^2}. \notag
\end{align}
Thus we only need to show $|C_R|$ goes to $0$ as $R\nearrow\infty$. Thanks to Chebyshev's inequality,
\begin{equation}
|C_R| \le |\left\{x: |v(x)|\ge R/2 \right\} \lesssim R^{-p} \|v\|_p^p,
\end{equation}
it suffices to check $v\in L^p(\mathbb{R}^d)$ for some $p>0$. Verification of this is quite straightforward:
\begin{itemize}
        \item $d\ge 4$. $v\in L^{\frac{2d}{d-2}}(\mathbb{R}^d)$ thanks to the Sobolev embedding $\dot{H}^1
  \left( \mathbbm{R}^d \right) \longhookrightarrow L^{\frac{2 d}{d - 2}}
  \left( \mathbbm{R}^d \right)$;
        \item $d=3$. $v\in L^{9/2}(\mathbb{R}^d)$ by assumptions of the theorem;
        \item $d=2$. $v\in L^6(\mathbb{R}^d)$ by assumptions of the theorem.
\end{itemize}
Therefore $|C_R|\rightarrow 0$ as $R\nearrow \infty$ in all cases. Consequently $I_{2b}\rightarrow 0$ as desired.

\texttt{Estimate of $I_3$.}

We have
\begin{align}
  I_3  =&\;  \mu \int_{\mathbbm{R}^d} \triangle v \cdot v \phi \left(
  \frac{v^2}{R} \right) \psi \left( \frac{x}{R_1} \right) \mathd x \nonumber\\
   =&\;  - \mu \int_{\mathbbm{R}^d}  \left| \nabla v \right|^2 \phi \left(
  \frac{v^2}{R} \right) \psi \left( \frac{x}{R_1} \right) \mathd x \nonumber\\
  & \;  - \mu \int_{\mathbbm{R}^d} \left( \nabla v \right) v \cdot \nabla
  \left[ \phi \left( \frac{v^2}{R} \right) \right] \psi \left( \frac{x}{R_1}
  \right) \mathd x \nonumber\\
  &\;   - \mu \int_{\mathbbm{R}^d} \left( \nabla v \right) v \cdot \nabla
  \left[ \psi \left( \frac{x}{R_1} \right) \right] \phi \left( \frac{v^2}{R}
  \right) \mathd x \nonumber\\
   =& :  I_{3 a} + I_{3 b} + I_{3 c} .
\end{align}
First consider $I_{3 a}$. As $\nabla v \in L^2 \left( \mathbbm{R}^d \right)$
it is clear from Lebesgue dominated convergence that

\begin{equation}
  I_{3a} \longrightarrow - \mu \int_{\mathbbm{R}^d} \left| \nabla v \right|^2
  \mathd x. \hspace{2em} \text{as } R_1 \nearrow \infty
  \text{ followed by } R \nearrow \infty .
\end{equation}

For $I_{3 b}$, we have by dominated convergence
\begin{align}
  \left| I_{3 b} \right| & \lesssim \; \int_{\mathbbm{R}^d} \left|
  \nabla v
  \right|^2  \frac{\left| v \right|^2}{R} \left| \phi'
  \left( \frac{\left| v\right|^2}{R} \right) \right| \mathd x \nonumber
  \\
  &\lesssim \; \int_{C_R} \left| \nabla v \right|^2 \mathd x
  \rightarrow 0
  \hspace{2em} \text{as } R_1 \nearrow \infty
  \text{ followed by } R \nearrow \infty.
\end{align}

For $I_{3 c}$, we have
\begin{align}
  \left| I_{3 c} \right| & \lesssim \; \int_{C_{R_1}^{\prime}} \left| \nabla v \right|
  \left| v \right| \phi \left( \frac{v^2}{R} \right)  \left| \frac{1}{R_1}
  \nabla \psi \right| \mathd x \nonumber\\
  & \lesssim \; \left\| \nabla v \right\|_{L^2 \left( C_{R_1}^{\prime} \right)}
  \left\| v \right\|_{L^{\frac{2 d}{d - 2}} \left( C_{R_1}^{\prime} \cap \Omega_R \right) }  \left\|
  \frac{1}{R_1} \nabla \psi \right\|_{L^d \left( \mathbbm{R}^d \right)}
  \rightarrow 0 \hspace{2em} \text{as } R_1 \nearrow \infty
\end{align}
since $v \in L^{\frac{2 d}{d - 2}} \left( \mathbbm{R}^d \right)$ by Sobolev
embedding when $d \geqslant 3$. When $d = 2$ note that as $R$ is fixed, $v \in
L^{\infty} \left( \Omega_{R} \right)$ so the above still holds.

\texttt{Estimate of $I_4$.}

This is similar to the estimate of $I_3$ and hence we omit the details.

Collecting the estimates, we have
\begin{equation}
  0 = I_1 + \cdots + I_4 \longrightarrow - \mu \int_{\mathbbm{R}^d} \left|
  \nabla v \right|^2 \mathd x - \left( \mu + \lambda \right)
  \int_{\mathbbm{R}^d} \left( \nabla \cdot v \right)^2 \mathd x
\end{equation}
which gives $\nabla v = 0$ and therefore $v =$constant. Together with the
assumptions (\ref{eq:cond-d=4}--\ref{eq:cond-d=2}) we have $v = 0$.
From the momentum equation (\ref{eq:NS-momentum})
and the relation (\ref{eq:NS-pressure}) we also obtain $\rho \equiv$ constant. This ends the proof of Theorem
\ref{thm:main}.

\section{Proofs of Theorems \ref{thm:all-d} and \ref{thm:div-v}}

\subsection{Proof of Theorem \ref{thm:all-d}}

Taking the dot product with $v$ on both sides of (\ref{eq:NS-momentum}), we get
\begin{equation}
  \rho v \cdot \nabla \left( \frac{\left| v \right|^2}{2} \right) = - A
  \frac{\gamma }{\gamma-1} \rho v \cdot \nabla \left( \rho^{\gamma - 1}
  \right) + \mu v \cdot \triangle v + \left( \mu + \lambda \right) v \cdot
  \nabla \left( \nabla \cdot v \right) . \label{eq:NS-momentum-multiply-v}
\end{equation}
Using $\nabla \cdot \left( \rho v \right) = 0$ and the identities
\begin{align}
  \triangle \left( \left| v \right|^2 \right) & = \; 2 v \cdot \triangle v + 2
  \left| \nabla v \right|^2, \\
  \nabla \cdot \left( v \nabla \cdot v \right) & = \; v \cdot \nabla \left(
  \nabla \cdot v \right) + \left| \nabla \cdot v \right|^2,
\end{align}
we can rewrite (\ref{eq:NS-momentum-multiply-v}) as
\begin{align}
  \frac{1}{2} \nabla \cdot \left( \rho v \left| v \right|^2 \right) & = \; -
  \frac{\gamma }{\gamma-1} A \nabla \cdot \left( \rho^{\gamma} v \right) +
  \mu \left( \frac{1}{2} \triangle \left( \left| v \right|^2 \right) - \left|
  \nabla v \right|^2 \right) \nonumber\\
  &  \; + \left( \mu + \lambda \right)  \left( \nabla \cdot \left( v \nabla
  \cdot v \right) - \left| \nabla \cdot v \right|^2 \right) \nonumber\\
  & = \; - \frac{\gamma }{\gamma-1} \nabla \cdot \left( Pv \right) + \mu
  \left( \frac{1}{2} \triangle \left( v^2 \right) - \left| \nabla v \right|^2
  \right) \nonumber\\
  &  \; + \left( \mu + \lambda \right) \left( \nabla \cdot \left( v \nabla
  \cdot v \right) - \left| \nabla \cdot v \right|^2 \right) .
  \label{eq:NS-momentum-multiply-v-1}
\end{align}
Now let $\psi$ be the cut-off function as defined in (\ref{eq:choice-psi}).
Multiplying (\ref{eq:NS-momentum-multiply-v-1}) by $\psi \left( \frac{x}{R_1}
\right)$ and integrating by parts, we obtain
\begin{align}
  0  = & \frac{1}{2}  \int_{\mathbbm{R}^d} \rho \left| v \right|^2 v \cdot
  \left( \nabla \psi \right) \left( \frac{x}{R_1} \right)  \frac{1}{R_1}
  \mathd x \nonumber\\
    &\; + \frac{\gamma }{\gamma-1}  \int_{\mathbbm{R}^d} Pv \cdot \left(
  \nabla \psi \right) \left( \frac{x}{R_1} \right)  \frac{1}{R_1} \mathd x
  \nonumber\\
  &  \; + \frac{\mu}{2}  \int_{\mathbbm{R}^d} v^2  \frac{1}{R_1^2}  \left(
  \triangle \psi \right) \left( \frac{x}{R_1} \right) \mathd x \nonumber\\
  &  \; - \mu \int_{\mathbbm{R}^d} \left| \nabla v \right|^2 \psi \left(
  \frac{x}{R_1} \right) \mathd x - \left( \mu + \lambda \right)
  \int_{\mathbbm{R}^d} \left| \nabla \cdot v \right|^2 \psi \left(
  \frac{x}{R_1} \right) \mathd x \nonumber\\
  &  \; - \left( \mu + \lambda \right)  \int_{\mathbbm{R}^d} \left( \nabla
  \cdot v \right) v \cdot \left( \nabla \psi \right) \left( \frac{x}{R_1}
  \right)  \frac{1}{R_1} \mathd x \nonumber\\
   = : & \; I_1 + \cdots + I_5 .
\end{align}
We shall estimate $I_1,\cdots,I_5$ one by one.

For $I_1$ we have
\begin{align}
  \left| I_1 \right|  \leqslant &\; \frac{1}{2}  \int_{C_{R_1}^{\prime}} \rho \left| v
  \right|^3  \left| \left( \nabla \psi \right) \left( \frac{x}{R_1} \right)
  \frac{1}{R_1} \right| \mathd x \nonumber\\
   \leqslant &\; \frac{1}{2}  \left\| \rho \right\|_{L^{\infty}}  \left\| v
  \right\|^3_{L^{\frac{3 d}{ d - 1}} \left( C_{R_1}^{\prime} \right)}  \left\|
  \left( \nabla \psi \right) \left( \frac{x}{R_1} \right)  \frac{1}{R_1}
  \right\|_{L^d} .
\end{align}
Here $C_{R_1}^{\prime}$ is the support of $\nabla \psi \left( \frac{x}{R_1} \right)$,
as defined in (\ref{eq:def-c-r-r1}). By assumption we have $v \in L^{\frac{3 d}
{d-1}} \left( \mathbbm{R}^d \right)$, thus $\left\| v
\right\|_{L^{\frac{3d}{d-1}} \left( C_{R_1}^{\prime} \right)}
\longrightarrow 0$ as $R_1 \nearrow \infty$. So $I_1 \rightarrow 0$ as $R_1
\nearrow \infty$.

For $I_2$ we have
\begin{align}
  \left| I_2 \right| \leqslant & \; \frac{\gamma }{\gamma-1}  \left\| P
  \right\|_{L^{\infty}}  \left\| v \right\|_{L^{d / \left( d - 1 \right)}
  \left( C_{R_1}^{\prime} \right)}  \left\| \left( \nabla \psi \right) \left(
  \frac{x}{R_1} \right)  \frac{1}{R_1} \right\|_{L^d} .
\end{align}
By assumption $v \in L^{d / \left( d - 1 \right)} \left( \mathbbm{R}^d
\right)$ so this term goes to $0$ too.

For $I_3$ we notice that the assumption $v \in L^{\frac{3 d}{d - 1}} \left(
\mathbbm{R}^d \right) \cap L^{\frac{d}{d - 1}} \left( \mathbbm{R}^d \right)$
leads to
\begin{equation}
  v \in L^2 \left( \mathbbm{R}^d \right)
\end{equation}
through interpolation (When $d = 2$ we already have $\frac{d}{d - 1} = 2$).
Thus
\begin{align}
  \left| I_3 \right|  \leqslant & \; \frac{\mu}{2}  \left\| v^2 \right\|_{L^1
  \left( \mathbbm{R}^d \right)}  \left\| \frac{1}{R_1^2}  \left( \triangle
  \psi \right) \left( \frac{x}{R_1} \right) \right\|_{L^{\infty} \left(
  \mathbbm{R}^d \right)} \nonumber\\
   \lesssim & \; \left\| v \right\|_{L^2 \left( \mathbbm{R}^d \right)}^2 R_1^{-
  2} \longrightarrow 0, \hspace{2em} \text{as } R_1 \nearrow \infty .
\end{align}
For $I_4$ we notice that
\begin{equation}
  \left| \nabla v \right|^2 \psi \left( \frac{x}{R_1} \right) \nocomma,
  \hspace{1em} \left| \nabla \cdot v \right|^2 \psi \left( \frac{x}{R_1}
  \right) \geqslant 0
\end{equation}
and furthermore at every $x$ are increasing with respect to $R_1$.\footnote{We can
take a subsequence in $R_1$ if necessary so that the function $\psi(x/R_1)$ is increasing.}
 An application
of Lebesgue's monotone convergence theorem then gives
\begin{equation}
  I_4 \longrightarrow - \mu \int_{\mathbbm{R}^d} \left| \nabla v \right|^2
  \mathd x - \left( \mu + \lambda \right) \int_{\mathbbm{R}^d} \left| \nabla
  \cdot v \right|^2 \mathd x
\end{equation}
as $R_1 \nearrow \infty$.

For $I_5$ we first recall (\ref{eq:div-of-momentum}-\ref{eq:def-p1-p2}),
which are obtained from taking divergence of (\ref{eq:NS-momentum}):
\begin{equation}
  \triangle \left( P - p_1 - \nabla \cdot p_2 \right) = 0, \hspace{2em} p_1 :
  = \left( - \triangle \right)^{- 1} \left[ \sum_{i, j = 1}^d \partial_i
  \partial_j \left( \rho v_i v_j \right) \right], \hspace{1em} p_2 : = \left(
  2 \mu + \lambda \right) v.
\end{equation}
The assumptions of Theorem \ref{thm:all-d} give
\begin{equation}
  P \in L^{\infty} \left( \mathbbm{R}^d \right), \hspace{2em} p_1 \in L^{ \frac {3 d }
  {2 ( d - 1 )}} \left( \mathbbm{R}^d \right), \hspace{2em} v \in
  L^{\frac {3 d }{ d - 1 }} \left( \mathbbm{R}^d \right) .
\end{equation}
Applying Lemma \ref{lem:div-p2} we obtain
\begin{equation}
  \nabla \cdot v = \frac{1}{2 \mu + \lambda}  \left[ P - p_1 - c \right] \in
  L^{\infty} \left( \mathbbm{R}^d \right) + L^{ \frac {3 d} {2 ( d - 1)}}
  \left( \mathbbm{R}^d \right) .
\end{equation}
From this it is easy to see that $I_5$ enjoys the same estimates as $I_1 +
I_2$. More precisely, we have
\begin{align}
  \left| I_5 \right|  \lesssim &  \; \int_{C_{R_1}^{\prime}} \left| P - c \right|  \left|
  v \right|  \left| \left( \nabla \psi \right) \left( \frac{x}{R_1} \right)
  \frac{1}{R_1} \right| \mathd x \nonumber\\
  &  \;+ \int_{C_{R_1}^{\prime}} \left| p_1 \right|  \left| v \right|  \left| \left(
  \nabla \psi \right) \left( \frac{x}{R_1} \right)  \frac{1}{R_1} \right|
  \mathd x \nonumber\\
   \leqslant & \; \left\| P - c \right\|_{L^{\infty}}  \left\| v \right\|_{L^{d
  / \left( d - 1 \right)} \left( C_{R_1}^{\prime} \right)}  \left\| \left( \nabla \psi
  \right) \left( \frac{x}{R_1} \right)  \frac{1}{R_1} \right\|_{L^d}
  \nonumber\\
  &  \; + \left\| p_1 \right\|_{L^{3 d / 2 \left( d - 1 \right)} \left(
  C_{R_1}^{\prime} \right)}  \left\| v \right\|_{L^{3 d / \left( d - 1 \right)} \left(
  C_{R_1}^{\prime} \right)}  \left\| \left( \nabla \psi \right) \left( \frac{x}{R_1}
  \right)  \frac{1}{R_1} \right\|_{L^d} \nonumber\\
    \longrightarrow & \; 0, \hspace{2em} \text{\tmop{as}} R_1 \nearrow \infty .
\end{align}
Collecting the estimates of $I_1--I_5$, we conclude that as $R_1 \nearrow
\infty$, we have
\begin{equation}
  0 = I_1 + \cdots + I_5 \longrightarrow - \mu \int_{\mathbbm{R}^d} \left|
  \nabla v \right|^2 \mathd x - \left( \mu + \lambda \right)
  \int_{\mathbbm{R}^d} \left| \nabla \cdot v \right|^2 \mathd x
\end{equation}
which gives $\nabla v = 0$ and the conclusions of Theorem \ref{thm:all-d}
follow.

\subsection{Proof of Theorem \ref{thm:div-v}}

Application of Lemma \ref{lem:p2} gives
\begin{equation}
  A \rho^{\gamma} = P = p_1 + \left( 2 \mu + \lambda \right) \nabla \cdot v +
  c \hspace{2em} a.e.\  x \in \mathbbm{R}^d .
\end{equation}
Defining $P_1$ as in (\ref{eq:P1-def}) and following the estimate
(\ref{eq:P1-est}), we have
\begin{equation}
  \left| P_1 \rho \right| \lesssim \left| p_1 \right| + \left| \nabla \cdot v
  \right| . \label{eq:P1-est-div-v}
\end{equation}
Recall that
\begin{equation}
  p_1 : = \left( - \triangle \right)^{- 1} \left[ \sum_{i, j = 1}^d \partial_i
  \partial_j \left( \rho v_i v_j \right) \right] \in L^{\frac{3 d}{ 2 ( d - 1
  )}} \left( \mathbbm{R}^d \right)
\end{equation}
by the assumption on $v$ and the boundedness of Riesz operators.

We rewrite (\ref{eq:NS-momentum-multiply-v}) as
\begin{align}
 & \frac{1}{2} \nabla \cdot \left( \rho v \left| v \right|^2 \right) \notag\\
   =&\; -
  \frac{A \gamma }{\gamma-1} \nabla \cdot \left( P_1 \rho v
  \right) + \mu \left( \frac{1}{2} \triangle \left( \left| v \right|^2 \right)
  - \left| \nabla v \right|^2 \right) + \left( \mu + \lambda \right)  \left(
  \nabla \cdot \left( v \nabla \cdot v \right) - \left| \nabla \cdot v
  \right|^2 \right) . \label{eq:NS-momentum-multiply-v-variant}
\end{align}
and pick the cut-off function $\psi$ as before with a parameter $R_1 > 0$
which will tend to $\infty$ later.

Multiplying (\ref{eq:NS-momentum-multiply-v-variant}) by $\psi \left(
\frac{x}{R_1} \right)$ and integrating by parts, we reach
\begin{align}
  0 = &\; \frac{1}{2}  \int_{\mathbbm{R}^d} \rho \left| v \right|^2 v \cdot
  \left( \nabla \psi \right) \left( \frac{x}{R_1} \right)  \frac{1}{R_1}
  \mathd x \nonumber\\
  &  \;+ A \frac{\gamma }{\gamma-1}  \int_{\mathbbm{R}^d} P_1 \rho v \cdot
  \left( \nabla \psi \right) \left( \frac{x}{R_1} \right)  \frac{1}{R_1}
  \mathd x \nonumber\\
  &  \; + \frac{\mu}{2}  \int_{\mathbbm{R}^d} \left| v \right|^2
  \frac{1}{R_1^2}  \left( \triangle \psi \right) \left( \frac{x}{R_1} \right)
  \mathd x \nonumber\\
  &  \; - \mu \int_{\mathbbm{R}^d} \left| \nabla v \right|^2 \psi \left(
  \frac{x}{R_1} \right) \mathd x - \left( \mu + \lambda \right)
  \int_{\mathbbm{R}^d} \left| \nabla \cdot v \right|^2 \psi \left(
  \frac{x}{R_1} \right) \mathd x \nonumber\\
  &  \; - \left( \mu + \lambda \right)  \int_{\mathbbm{R}^d} \left( \nabla
  \cdot v \right) v \cdot \left( \nabla \psi \right) \left( \frac{x}{R_1}
  \right)  \frac{1}{R_1} \mathd x \nonumber\\
   = : & \; I_1 + \cdots I_5 .
\end{align}
Now we estimate them one by one.

For $I_1$, we have
\begin{equation}
  \left| I_1 \right| \lesssim \left\| \rho \right\|_{L^{\infty} \left(
  \mathbbm{R}^d \right)}  \left\| v \right\|_{L^{\frac{3 d}{ d - 1}}
  \left( C_{R_1}^{\prime} \right)}^3  \left\| \left( \nabla \psi \right) \left(
  \frac{x}{R_1} \right)  \frac{1}{R_1} \right\|_{L^d \left( \mathbbm{R}^d
  \right)} \longrightarrow 0 \text{ as } R_1 \nearrow \infty
\end{equation}
because $v \in L^{3 d / \left( d - 1 \right)} \left( \mathbbm{R}^d \right)$ by
assumption.

For $I_2$ we apply the estimate (\ref{eq:P1-est-div-v}) to obtain
\begin{align}
  \left| I_2 \right| \lesssim &\; \int_{\mathbbm{R}^d} \left| p_1 \right|
  \left| v \right|  \left| \left( \nabla \psi \right) \left( \frac{x}{R_1}
  \right)  \frac{1}{R_1} \right| \mathd x + \int_{\mathbbm{R}^d} \left| \nabla
  \cdot v \right|  \left| v \right|  \left| \left( \nabla \psi \right) \left(
  \frac{x}{R_1} \right)  \frac{1}{R_1} \right| \mathd x \nonumber\\
   = : &\; I_{2 a} + I_{2 b} .
\end{align}
For $I_{2 a}$ we have
\begin{equation}
  I_{2 a} \lesssim \left\| p_1 \right\|_{L^{ \frac{3 d}  {2 ( d - 1 )}}
  \left( C_{R_1}^{\prime} \right)}  \left\| v \right\|_{L^{\frac{3 d} { d - 1}}
  \left( C_{R_1}^{\prime} \right)}  \left\| \left( \nabla \psi \right) \left(
  \frac{x}{R_1} \right)  \frac{1}{R_1} \right\|_{L^d \left( \mathbbm{R}^d
  \right)} \longrightarrow 0 \text{ as } R_1 \nearrow \infty .
\end{equation}
For $I_{2 b}$ we consider two cases:
\begin{itemize}
  \item $d \geqslant 4$. We estimate it as
  \begin{equation}
    I_{2 b} \lesssim \left\| \nabla \cdot v \right\|_{L^2 \left( C_{R_1}^{\prime}
    \right)}  \left\| v \right\|_{L^{\frac{2 d } {d - 2}} \left(
    C_{R_1}^{\prime} \right)}  \left\| \left( \nabla \psi \right) \left( \frac{x}{R_1}
    \right)  \frac{1}{R_1} \right\|_{L^d \left( \mathbbm{R}^d \right)}
    \longrightarrow 0 \text{ as } R_1 \nearrow \infty .
  \end{equation}
  \item $d \le 3$. In this case we have
  \begin{equation}
    I_{2 b} \lesssim \left\| \nabla \cdot v \right\|_{L^2 \left( C_{R_1}^{\prime}
    \right)}  \left\| v \right\|_{L^{\frac{3d}{d-1}} \left(
    C_{R_1}^{\prime} \right)}  \left\| \left( \nabla \psi \right) \left( \frac{x}{R_1}
    \right)  \frac{1}{R_1} \right\|_{L^{6 d / \left( d + 2 \right)} \left(
    \mathbbm{R}^d \right)} \longrightarrow 0 \text{ as } R_1 \nearrow
    \infty
  \end{equation}
  since when $d \le 3$ we have $\frac{6 d}{d + 2} > d$ and therefore
  \begin{equation}
    \left\| \left( \nabla \psi \right) \left( \frac{x}{R_1} \right)
    \frac{1}{R_1} \right\|_{L^{6 d / \left( d + 2 \right)} \left(
    \mathbbm{R}^d \right)} \longrightarrow 0 \text{\tmop{as}} R_1 \nearrow
    \infty \nosymbol .
  \end{equation}
\end{itemize}
For $I_3$ we estimate it as follows:
\begin{itemize}
  \item $d \geqslant 5$.
  \begin{align}
    \left| I_3 \right| \lesssim &\; \left\| \left| v \right|^2 \right\|_{L^{d
    / \left( d - 2 \right)} \left( C_{R_1}^{\prime} \right)}  \left\| \frac{1}{R_1^2}
    \left( \triangle \psi \right) \left( \frac{x}{R_1} \right) \right\|_{L^{d
    / 2} \left( \mathbbm{R}^d \right)} \nonumber\\
    = & \;\left\| v \right\|_{L^{2 d / \left( d - 2 \right)} \left( C_{R_1}^{\prime}
    \right)}^2  \left\| \frac{1}{R_1^2}  \left( \triangle \psi \right) \left(
    \frac{x}{R_1} \right) \right\|_{L^{d / 2} \left( \mathbbm{R}^d \right)}
    \longrightarrow 0 \text{ as } R_1 \nearrow \infty .
  \end{align}
  \item $d \leqslant 4$.
  \begin{align}
    \left| I_3 \right| \lesssim &\; \left\| \left| v \right|^2 \right\|_{L^{3
    d / 2 \left( d - 1 \right)} \left( \mathbbm{R}^d \right)}  \left\|
    \frac{1}{R_1^2}  \left( \triangle \psi \right) \left( \frac{x}{R_1}
    \right) \right\|_{L^{3 d / \left( d + 2 \right)} \left( \mathbbm{R}^d
    \right)} \nonumber\\
     = &\; \left\| v \right\|_{L^{3 d / \left( d - 1 \right)} \left( C_{R_1}^{\prime}
    \right)}^2  \left\| \frac{1}{R_1^2}  \left( \triangle \psi \right) \left(
    \frac{x}{R_1} \right) \right\|_{L^{3 d / \left( d + 2 \right)} \left(
    \mathbbm{R}^d \right)} .
  \end{align}
  When $d \leqslant 4$ we have $\frac{3 d}{d + 2} \geqslant \frac{d}{2}$ so
  $I_3 \longrightarrow 0$.
\end{itemize}
For $I_4$ the same argument for $I_4$ in the proof of Theorem \ref{thm:all-d}
works, and we have
\begin{equation}
  I_4 \longrightarrow - \mu \int_{\mathbbm{R}^d} \left| \nabla v \right|^2
  \mathd x - \left( \mu + \lambda \right) \int_{\mathbbm{R}^d} \left| \nabla
  \cdot v \right|^2 \mathd x \text{ as } R_1 \nearrow \infty .
\end{equation}
Finally, since $I_5$ is essentially the same as $I_{2 b}$, we have
\begin{equation}
  I_5 \longrightarrow 0 \text{ as } R_1 \nearrow \infty .
\end{equation}
Collecting all the estimates, we have
\begin{equation}
  0 = I_1 + \cdots + I_5 \longrightarrow - \mu \int_{\mathbbm{R}^d} \left|
  \nabla v \right|^2 \mathd x - \left( \mu + \lambda \right)
  \int_{\mathbbm{R}^d} \left| \nabla \cdot v \right|^2 \mathd x.
\end{equation}
Consequently $\nabla v = 0$ and the conclusions of Theorem \ref{thm:div-v}
follow.

{\bf Acknowledgement.} This work was initiated at University of British Columbia when the second author visited there. The research of D. Li is partly supported by NSERC Discovery grant and a start-up grant from University of British Columbia. The research of X. Yu is partly supported by NSERC Discovery grant and a start-up grant from University of Alberta.


\begin{thebibliography}{11}
  \bibitem{Chae2012}Dongho~Chae.{\newblock} Remarks on the liouville type
  results for the compressible navier-stokes equations in
  $\mathbb{R}^N$.{\newblock} \tmtextit{Nonlinearity}, 25:1345--1349,
  2012.{\newblock}

  \bibitem{Feireisl2004}Eduard~Feireisl.{\newblock} \tmtextit{Dynamics of
  Viscous Compressible Fluids}, volume~26 of \tmtextit{Oxford Lecture Series
  in Mathematics and its Applications}.{\newblock} Oxford University Press,
  2004.{\newblock}

  \bibitem{Jost2007}J\"urgen~Jost.{\newblock} \tmtextit{Partial Differential
  Equations}.{\newblock} Graduate Texts in Mathematics 214. Springer-Verlag
  New York, LLC, 2007.{\newblock}

  \bibitem{Lions1998}Pierre-Lious~Lions.{\newblock} \tmtextit{Mathematical
  topics in fluid mechanics. Vol. 2. Compressible models}.{\newblock} Oxford
  Lecture Series in Mathematics and its Applications 10. Oxford University
  Press, 1998.{\newblock}

  \bibitem{Novotny2004}A.~Novotny and I.~Stra.{\newblock}
  \tmtextit{Introduction to the Mathematical Theory of Compressible Flow},
  volume~27 of \tmtextit{Oxford Lecture Series in Mathematics and its
  Applications}.{\newblock} Oxford University Press, 2004.{\newblock}
\end{thebibliography}
\end{document}